\documentclass[a4paper, 12pt]{article}
\usepackage[a4paper, total={6in,10in}]
{geometry}
\usepackage[utf8]{inputenc}
\usepackage{fullpage}
\usepackage{amsmath}
\usepackage{amssymb}
\usepackage{amsfonts}
\usepackage{amsthm}
\usepackage{graphicx}
\usepackage{cite}
\usepackage{booktabs}
\usepackage{float}
\usepackage{algorithm2e}
\usepackage{enumitem}
\newtheoremstyle{dotless}{}{}{\itshape}{}{\bfseries}{}{ }{}
\theoremstyle{dotless}
\newtheorem{theorem}{Theorem}[section]
\newtheorem{corollary}{Corollary}[section]
\newtheorem{lemma}{Lemma}[section]
\theoremstyle{remark}
\theoremstyle{dotless}
\newtheorem{remark}{\textit{\textbf{Remark}}}[section]
\theoremstyle{definition}

\title{Exploring new upper and lower bounds for the $A_{\alpha}$-energy of graphs}

\author{Mainak Basunia\thanks{Department of Mathematics, Indian Institute of Technology Kharagpur, Kharagpur 721302, India. Email: mainakmaths@iitkgp.ac.in, leo28mynnix@gmail.com}\and Pratima Panigrahi\thanks{Department of Mathematics, Indian Institute of Technology Kharagpur, Kharagpur 721302, India. Email: pratima@maths.iitkgp.ac.in}}

\date{}

\begin{document}
\maketitle
\baselineskip=0.25in

\begin{abstract}
Let $G$ be a graph on $n$ vertices and $m$ edges. For $\alpha \in [0,1]$, the $A_{\alpha}$-matrix of $G$ is defined as $A_{\alpha}(G) = \alpha D(G) + (1- \alpha) A(G)$, where $A(G)$ is the adjacency matrix and $D(G)$ is the degree diagonal matrix of $G$. If $\rho_1 \geq \rho_2 \ldots \geq \rho_n$ are the eigenvalues of $A_{\alpha}(G)$, the $A_{\alpha}$-energy of $G$ is defined as $E_{A_{\alpha}}(G) = \sum_{i=1}^{n} |\rho_i -\frac{2\alpha m}{n}|$. In this paper, we present novel upper and lower bounds for $E_{A_\alpha}(G)$ in terms of standard graph invariants, showing that each bound is sharp and identifying the specific graphs attaining them. For selected bounds, we provide brief comparative analysis with existing results, observing improved estimates. Furthermore, we establish new relations between $E_{A_\alpha}(G)$ and other well known graph energies, including adjacency, Laplacian, as well as the adjacency energy of the line graph.

\bigskip \noindent \textbf{Keywords:} Graphs, $A_{\alpha}$-matrix, $A_{\alpha}$-eigenvalues, $A_{\alpha}$-energy, bounds

\noindent {\bf AMS Subject Classification (2010):} 05C50, 05C05, 15A18
\end{abstract}


\section{Introduction}\label{sec1}
All graphs considered in this paper are simple, undirected and finite. Let $G=$ $(V(G)$, $E(G))$ be a graph with vertex set $V(G)$ and edge set $E(G)$. We denote the number of vertices (order) and edges (size) of $G$ by $n$ and $m$, respectively. The degree of the vertex $v$, denoted by $d_G(v)$, is the number of its neighbors in $G$. The first Zagreb index of $G$, denoted by $Z_1(G)$, is the sum of the squares of all vertex degrees.

The adjacency matrix $A(G)$, of $G$, is the $n \times n$ symmetric matrix whose $(i,j)^{th}$ entry is 1 if the $i^{th}$ and $j^{th}$ vertices are adjacent, and 0 otherwise. The degree matrix $D(G)$, of $G$, is the diagonal matrix of order $n$ whose diagonal entries are the vertex degrees. The Laplacian matrix and signless Laplacian matrix of $G$ are defined as $L(G) = D(G) - A(G)$ and $L_S(G) = D(G) + A(G)$, respectively. For any real $\alpha \in [0,1]$, Nikiforov \cite{A_Q-merging_by_Nikiforov} introduced the $A_{\alpha}$-matrix of $G$ as,
\begin{equation}\label{eq1}
A_{\alpha}(G) = \alpha D(G) + (1- \alpha) A(G),\qquad \alpha \in [0,1].
\end{equation}
Notable special cases include $A_{\alpha}(G) = A(G)$ when $\alpha = 0$, $A_{\alpha}(G) = \frac{1}{2}L_S(G)$ when $\alpha = \frac{1}{2}$, and $A_{\alpha}(G) = D(G)$ when $\alpha =1$. Related studies on the properties and spectrum of the $A_\alpha$-matrix can be found in \cite{note_on_positive_semidefiniteness_by_nikiforov, A_alpha_spec_of_graphs_by_lin_xue_shu, spectra_of_join_by_basunia}.

Let $M$ be a real symmetric matrix of order $p$. Its eigenvalues are real and can be arranged as: $\lambda_1(M) \geq \lambda_2(M) \geq \ldots \geq \lambda_p(M)$. The multiset of all the eigenvalues of $M$ is called the $M$-spectrum and is denoted by $\text{spec}(M)$. Furthermore, if $\lambda_1, \lambda_2, \ldots, \lambda_r$ are all the distinct eigenvalues of $M$ with corresponding multiplicities $m_1, m_2, \ldots, m_r$, then the spectrum of $M$ is denoted by $\text{spec}(M) = \{\lambda_1^{[m_1]}, \lambda_2^{[m_2]}, \ldots, \lambda_r^{[m_r]} \}$. For a graph $G$ on $n$ vertices, we denote the spectra of $A(G)$, $L(G)$, $L_S(G)$ and $A_\alpha(G)$ by $\gamma_1(G) \geq \gamma_2(G) \geq \ldots \geq \gamma_n(G)$, $\mu_1(G) \geq \mu_2(G) \geq \ldots \geq \mu_n(G)$, $q_1(G) \geq q_2(G) \geq \ldots \geq q_n(G)$ and $\rho_1(G) \geq \rho_2(G) \geq \ldots \geq \rho_n(G)$, respectively. When there is no confusion regarding the underlying graph, we just write the spectrum as  $\gamma_1 \geq \gamma_2 \geq \ldots \geq \gamma_n$ instead of $\gamma_1(G) \geq \gamma_2(G) \geq \ldots \geq \gamma_n(G)$ and so on. We also define $S_{A_\alpha}^{(k)}(G) =\sum_{i=1}^k\rho_i$, and denote its Laplacian and signless Laplacian analogues by $S_L^{(k)}(G)$ and $S_{L_S}^{(k)}(G)$, respectively.

The notion of the energy $\mathfrak{E}(G)$ of a graph $G$ with $n$ vertices and $m$ edges was introduced by Gutman \cite{energy_by_gutman} in connection with the $\pi$-molecular energy. It is defined as $\mathfrak{E}(G) = \sum_{i=i}^n |\gamma_i|$, whereas the Laplacian energy $E_L(G)$ \cite{lap_energy_by_gutman} and signless Laplacian energy $E_{L_S}(G)$ \cite{signless_lap_energy_by_abreu} are defined as $E_L(G) = \sum_{i=1}^{n}|\mu_i - \frac{2m}{n}|$ and $E_{L_S}(G) = \sum_{i=1}^{n}|q_i - \frac{2m}{n}|$, respectively. For $0\leq \alpha \leq 1$, the $A_\alpha$-energy \cite{A_alpha_energy_by_guo} of $G$ , denoted by $E_{A_\alpha}(G)$, is defined as $E_{A_\alpha}(G) = \sum_{i=1}^n |\rho_i - \frac{2\alpha m}{n}| = \sum_{i=1}^n \vartheta_i$, where $\vartheta_i =|\rho_i - \frac{2\alpha m}{n}|$ for $i=1,2, \ldots, n$. From this definition, it is clear that $E_{A_0}(G) = \mathfrak{E}(G)$ and $2E_{A_{\frac{1}{2}}}(G) =E_{L_S}(G)$ which shows that the $A_\alpha$-energy unifies the theories of (adjacency) energy and signless Laplacian energy of a graph. The study of $E_{A_\alpha}(G)$ is relatively recent, with contributions in \cite{sum_of_A_alpha_eigenvalues_by_lin,bounds_on_A_alpha_energy_by_zhou, On_max_a_alpha_spec_rad_of_unicyclic_bicyclic_fixed_girth_pendant_by_das_mahato, bounds_for_A_alpha_energy_by_shaban}. 

This paper advances the study of $A_{\alpha}$-energy by presenting new upper and lower bounds for it, expressed predominantly in terms of fundamental graph invariants, such as order, size, maximum/minimum degree and the first Zagreb index. We discuss the novelty of these bounds, show that each one is sharp, and identify the graphs achieving equality. For selected results, we conduct targeted numerical comparison with existing bounds in the literature, highlighting notable improvements in estimation quality. Beyond these bounds, we establish new relations between $E_{A_\alpha}(G)$ and other cornerstone graph energies, including adjacency energy, Laplacian energy, and the adjacency energy of the corresponding line graph, thereby creating a broader and more unified framework for energy based graph analysis. This contributions not only enrich the theoretical landscape but also serve, to some extent, as practical tools in situations where direct computation is difficult.

The remainder of the paper is organized as follows: Section $2$ covers preliminaries, basic notations and some relevant known results. Section $3$ and $4$ present our new upper and lower bounds for $E_{A_\alpha}(G)$, respectively. Section $5$ develops relationships between $E_{A_\alpha}(G)$ and other graph energies. Equality characterizations are provided throughout Sections $3-5$ whenever possible.


\section{Preliminaries}\label{sec2}
Throughout the remainder of this paper, while referring to a graph $G$, we will assume that $n$, $m$, $\Delta$, $\delta$ and $Z_1$ denote the order, size, maximum degree, minimum degree and the first Zagreb index of $G$, respectively, unless stated otherwise. For $0 \leq \alpha \leq 1$, the $A_{\alpha}$-spread of $G$, denoted by $\Theta_{A_\alpha}(G)$, is defined as the difference between the largest and smallest $A_\alpha$-eigenvalues of $G$, i.e. $\Theta_{A_\alpha}(G) = \rho_1 -\rho_n$. The line graph of $G$, represented as $\mathcal{L}(G)$, is the graph with $V(\mathcal{L}(G)) = E(G)$, and two vertices in $\mathcal{L}(G)$ are adjacent precisely when their corresponding edges in $G$ share a common endpoint. 

We adopt the following notations for standard graph classes : $P_k$ and $C_k$ denote the path and the cycle on $k$ vertices each, respectively; $K_a$ and $K_{a,b}$ to denote the complete graph on $a$ vertices and complete bipartite graph having two partite sets of $a$ and $b$ vertices, respectively; $S_a$ is the star graph $K_{1,a-1}$; $S_{a,b}$ is the double star obtained by joining the centers of $S_{a+1}$ and $S_{b+1}$; $W_k$ is the wheel graph formed by joining an isolated vertex to $C_{k-1}$; $L_k$ is the ladder graph with $k$ rungs, isomorphic to cartesian product of $P_k$ and $K_2$; $B_k$ is the book graph consisting of $k$ copies of $C_4$ sharing a common edge; $F_k$ to denote the friendship graph with $k$ triangles sharing one common vertex; and $C_{a,b}$ $(b\leq a)$ is the comb graph obtained from $P_a$ by attaching $b$ pendant vertices to $b$ consecutive vertices of $P_a$ starting from one end.   

In order to develop our main results, we rely on several foundational tools from the literature, which we summarize below. For any matrix $M$, $\mathcal{E}(M)$ denotes the matrix energy of $M$, which is defined as the sum of its singular values.

\begin{lemma}\textup{\cite{max_props_by_fan}}\label{lem9}
    Given two real square matrices $M$ and $N$ of same order, $\mathcal{E}(M+N) \leq \mathcal{E}(M) + \mathcal{E}(N)$.
    Equality is satisfied only when there is an orthogonal matrix $P$ that guarantees that both the matrices $PM$ and $PN$ are positive semidefinite.
\end{lemma}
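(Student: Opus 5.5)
The plan is to use the variational characterization of the trace norm: for any real square matrix $X$ of order $p$,
\[
\mathcal{E}(X)=\max_{U\ \text{orthogonal}} \operatorname{tr}(UX).
\]
To establish this, write a singular value decomposition $X=V\Sigma W^{T}$ with $V,W$ orthogonal and $\Sigma=\operatorname{diag}(\sigma_1,\ldots,\sigma_p)\geq 0$. Then $\operatorname{tr}(UX)=\operatorname{tr}(W^TUV\Sigma)=\sum_i Q_{ii}\sigma_i$, where $Q=W^TUV$ is orthogonal. Every entry of an orthogonal matrix satisfies $|Q_{ii}|\leq 1$, so this is at most $\sum_i\sigma_i=\mathcal{E}(X)$. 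The value $\mathcal{E}(X)$ is attained at $U=WV^T$, which gives $UX=W\Sigma W^T$.

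The inequality then follows at once. Choose an orthogonal $U$ with $\operatorname{tr}(U(M+N))=\mathcal{E}(M+N)$. Then
\[
\mathcal{E}(M+N)=\operatorname{tr}(UM)+\operatorname{tr}(UN)\leq \mathcal{E}(M)+\mathcal{E}(N).
\]

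For the equality case, first suppose some orthogonal $P$ makes $PM$ and $PN$ positive semidefinite. Then $P(M+N)$ is also positive semidefinite. For a positive semidefinite matrix the singular values coincide with the eigenvalues, and left multiplication by an orthogonal matrix leaves singular values unchanged. Hence
\[
\mathcal{E}(M+N)=\operatorname{tr}(P(M+N))=\operatorname{tr}(PM)+\operatorname{tr}(PN)=\mathcal{E}(M)+\mathcal{E}(N).
\]
Conversely, if equality holds, take $U$ as above. Then $\operatorname{tr}(UM)=\mathcal{E}(M)$ and $\operatorname{tr}(UN)=\mathcal{E}(N)$, since neither term can exceed its bound.

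The key step is to show that $\operatorname{tr}(UX)=\mathcal{E}(X)$ forces $UX$ to be symmetric positive semidefinite. The singular values of $Y=UX$ are those of $X$, so the claim reduces to this: if $\operatorname{tr}Y=\sum_i\sigma_i(Y)$, then $Y\succeq 0$. I would argue as follows.
\begin{itemize}
\item Write $Y=V\Sigma W^T$. Then $\operatorname{tr}Y=\sum_i (W^TV)_{ii}\sigma_i$.
\item Equality forces $(W^TV)_{ii}=1$ for every $i$ with $\sigma_i>0$.
\item An orthogonal matrix with a diagonal entry equal to $1$ has all other entries in that row and column equal to $0$. So on the span of the positive singular directions, $V$ and $W$ agree column by column: $Ve_i=We_i$.
\item Hence $Y=\sum_{\sigma_i>0}\sigma_i v_iv_i^T$, which is symmetric positive semidefinite.
\end{itemize}
Taking $P=U$ then completes the proof.

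I expect this equality analysis to be the main obstacle, specifically the bookkeeping when some singular values vanish. Zero singular values carry no constraint, which is harmless because those terms contribute nothing to $Y$. If needed, the same conclusion can instead be reached through the polar decomposition.
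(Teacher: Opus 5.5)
Your proof is correct, and it proves slightly more than the statement. The paper gives no proof of this lemma; it simply quotes it from Ky Fan. Your argument is the standard proof behind that citation:
\begin{itemize}
\item You use the variational formula $\mathcal{E}(X)=\max_{U}\operatorname{tr}(UX)$ over orthogonal $U$, which gives the triangle inequality at once.
\item You then observe that $\operatorname{tr}Y=\sum_i\sigma_i(Y)$ forces $w_i^{T}v_i=1$, hence $v_i=w_i$ whenever $\sigma_i>0$, so $Y=\sum_{\sigma_i>0}\sigma_i v_iv_i^{T}$ is symmetric positive semidefinite.
\end{itemize}
You also prove the converse direction, so you obtain an ``if and only if'' characterization rather than only the ``only when'' that the lemma asserts.
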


\begin{lemma}\textup{\cite{max_props_by_fan}}\label{lem21} 
    If $M$ is a symmetric matrix of order $p$, then $\mathcal{E}(M) = \sum_{i=1}^{p}|\lambda_i(M)|$.
\end{lemma}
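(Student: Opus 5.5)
The statement is Lemma~\ref{lem21}: for a real symmetric $M$ of order $p$, $\mathcal{E}(M)=\sum_{i=1}^p|\lambda_i(M)|$. The plan is to identify the singular values of $M$ with the absolute values of its eigenvalues. By definition, the singular values of $M$ are the nonnegative square roots of the eigenvalues of $M^{T}M$. The proof then reduces to showing that the eigenvalues of $M^{T}M$ are exactly $\lambda_i(M)^2$, counted with multiplicity.

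First, invoke the spectral theorem for real symmetric matrices. It gives an orthogonal matrix $Q$ and a diagonal matrix $\Lambda=\mathrm{diag}(\lambda_1(M),\ldots,\lambda_p(M))$ such that $M=Q\Lambda Q^{T}$. Since $M^{T}=M$, we get
\[
M^{T}M=M^{2}=Q\Lambda Q^{T}Q\Lambda Q^{T}=Q\Lambda^{2}Q^{T}.
\]
So $M^{T}M$ is orthogonally similar to $\mathrm{diag}(\lambda_1(M)^2,\ldots,\lambda_p(M)^2)$. Its eigenvalues, with multiplicity, are therefore $\lambda_i(M)^2$ for $i=1,\ldots,p$.

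Taking nonnegative square roots, the singular values of $M$ are $\sqrt{\lambda_i(M)^2}=|\lambda_i(M)|$. Summing these gives $\mathcal{E}(M)=\sum_{i=1}^p|\lambda_i(M)|$.

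Alternatively, one can write $\Lambda=\Sigma\,\mathrm{sgn}(\Lambda)$, where $\Sigma=|\Lambda|$. Then $M=(Q\,\mathrm{sgn}(\Lambda))\,\Sigma\,Q^{T}$ is an explicit singular value decomposition, with the convention $\mathrm{sgn}(0)=1$ so that the left factor is orthogonal.

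There is no substantive obstacle. The only point needing care is that the correspondence between eigenvalues and singular values respects multiplicities. This is guaranteed because the argument diagonalizes $M$ with a single orthogonal matrix rather than matching the two spectra as sets.
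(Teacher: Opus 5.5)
Your proof is correct. The paper gives no proof of this lemma; it is simply quoted from Ky Fan \cite{max_props_by_fan}, so there is no argument in the paper to compare yours against. You supply the standard self-contained justification: orthogonal diagonalization gives $M^{T}M=M^{2}=Q\Lambda^{2}Q^{T}$, so the singular values are exactly the $|\lambda_i(M)|$, with multiplicities respected. Your explicit decomposition $M=(Q\,\mathrm{sgn}(\Lambda))\,\Sigma\,Q^{T}$, with $\mathrm{sgn}(0)=1$, is likewise a valid singular value decomposition.

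One point is worth stating explicitly. You are right to read ``symmetric'' as \emph{real} symmetric, which is how the paper uses the lemma (all its matrices are real graph matrices). The same argument, with a unitary $Q$ and $M^{*}M$ in place of $M^{T}M$, covers Hermitian (indeed normal) matrices. It fails for complex symmetric matrices: $\begin{pmatrix}1 & i\\ i & -1\end{pmatrix}$ is nonzero and nilpotent, so its eigenvalues are all $0$ while it has a positive singular value.
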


\begin{lemma} \textup{\cite{th_of_weyl_by_fan}} \label{lem1}
    For two symmetric matrices $M$ and $N$ of order $p$, $\sum_{i=1}^{k}\lambda_i(M+N) \leq \sum_{i=1}^{k}\lambda_i(M) + \sum_{i=1}^{k}\lambda_i(N)$, where $1 \leq k \leq p$.
\end{lemma}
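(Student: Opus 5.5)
This statement is Ky Fan's inequality. The approach is the variational characterization of the sum of the $k$ largest eigenvalues of a real symmetric matrix:
\[
\sum_{i=1}^{k}\lambda_i(M)=\max\{\operatorname{tr}(U^{T}MU) : U\in\mathbb{R}^{p\times k},\ U^{T}U=I_k\}.
\]
Once this identity is available, subadditivity follows at once. Choose $U$ attaining the maximum for $M+N$. Then
\[
\sum_{i=1}^{k}\lambda_i(M+N)=\operatorname{tr}(U^{T}MU)+\operatorname{tr}(U^{T}NU).
\]
Each of the two traces on the right is at most the corresponding maximum over orthonormal $p\times k$ frames, which gives the claimed inequality for every $1\le k\le p$.

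So the work is in proving the variational identity, in two steps.

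\emph{Lower bound on the maximum.} Spectrally decompose $M=Q\Lambda Q^{T}$ with $Q$ orthogonal and the eigenvalues $\lambda_1\ge\cdots\ge\lambda_p$ in decreasing order. Take $U$ to be the first $k$ columns of $Q$. Then $\operatorname{tr}(U^{T}MU)=\sum_{i=1}^{k}\lambda_i$, so the maximum is at least $\sum_{i=1}^k\lambda_i(M)$.

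\emph{Upper bound, the main step.} For an arbitrary $U$ with $U^{T}U=I_k$, set $W=Q^{T}U$, which again has orthonormal columns. Then
\[
\operatorname{tr}(U^{T}MU)=\operatorname{tr}(W^{T}\Lambda W)=\sum_{i=1}^{p}\lambda_i w_i, \qquad w_i=\sum_{j}W_{ij}^2 .
\]
The weights satisfy two constraints. First, $0\le w_i\le 1$: $w_i$ is the squared norm of the projection of the $i$th standard basis vector onto the column space of $W$. Second, $\sum_i w_i=\operatorname{tr}(W^{T}W)=k$. Maximizing $\sum_i\lambda_i w_i$ under these constraints is a linear problem on a polytope whose optimum puts weight $1$ on the $k$ largest $\lambda_i$. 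Explicitly, writing
\[
\sum_{i\le k}\lambda_i-\sum_i\lambda_i w_i=\sum_{i\le k}(1-w_i)\lambda_i-\sum_{i>k}w_i\lambda_i,
\]
the first sum is at least $\lambda_k\sum_{i\le k}(1-w_i)$ and the second is at most $\lambda_k\sum_{i>k}w_i$. These two multipliers of $\lambda_k$ are equal because $\sum_i w_i=k$, so the difference is nonnegative.

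I expect this rearrangement step to be the only real obstacle. It requires checking the bound $w_i\le1$, which holds because $WW^{T}$ is an orthogonal projection and so has diagonal entries in $[0,1]$. Everything else is bookkeeping, and the case $k=p$ holds with equality since trace is linear.
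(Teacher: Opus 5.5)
Your proof is correct and complete. The bounds $0\le w_i\le 1$ and $\sum_i w_i=k$, followed by the comparison against $\lambda_k$, establish the variational identity, and your lower-bound step applied to $M+N$ already supplies a maximizing $U$. The paper gives no proof of this lemma; it only cites Ky Fan (1949), whose result rests on exactly this maximum principle for $\sum_{i\le k}\lambda_i$, so your route is the standard one behind the cited source.
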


\begin{lemma}\textup{\cite{A_alpha_energy_and_zagreb_by_pirzada}}\label{lem12}
    For a graph $G$, let $\sigma$ denotes the greatest integer in $[1,n]$ satisfying $\rho_{\sigma} \geq \frac{2\alpha m}{n}$. If $0\leq \alpha <1$, then $E_{A_\alpha}(G)=2S_{A_\alpha}^{(\sigma)}(G)-\frac{4\alpha m\sigma}{n}=$ \resizebox{.22\textwidth}{!}{$\displaystyle\max_{1\leq k \leq n}\bigg\{2S_{A_\alpha}^{(k)}(G)-\frac{4\alpha mk}{n}\bigg\}$}.
\end{lemma}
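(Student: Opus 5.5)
Since the eigenvalues of $A_\alpha(G)$ sum to its trace, the first step is to compute that trace. The diagonal of $A_\alpha(G)$ is $\alpha D(G)$, so $\sum_{i=1}^n \rho_i = \operatorname{tr}(A_\alpha(G)) = \alpha\sum_v d_G(v) = 2\alpha m$. Consequently $\sum_{i=1}^n (\rho_i - \frac{2\alpha m}{n}) = 0$.

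The key step is to split this sum at $\sigma$. Because the eigenvalues are ordered, $\rho_i - \frac{2\alpha m}{n} \geq 0$ for $i\le \sigma$ and $<0$ for $i>\sigma$. Since the total is zero, the positive part and the negative part have equal absolute value. Hence
\[
E_{A_\alpha}(G)=2\sum_{i=1}^{\sigma}\Big(\rho_i-\frac{2\alpha m}{n}\Big)=2S^{(\sigma)}_{A_\alpha}(G)-\frac{4\alpha m\sigma}{n}.
\]
We must also check that $\sigma$ exists. This holds because $\rho_1$ is at least the average $\frac{2\alpha m}{n}$.

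For the maximum formula, set $f(k)=2\sum_{i=1}^{k}(\rho_i-\frac{2\alpha m}{n})$. The increment $f(k)-f(k-1)=2(\rho_k-\frac{2\alpha m}{n})$ is $\geq 0$ for $k\le\sigma$ and $<0$ for $k>\sigma$. So $f$ is nondecreasing up to $\sigma$ and strictly decreasing afterward, and $\max_{1\le k\le n} f(k)=f(\sigma)$.

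The only subtlety is the role of $\alpha<1$. The argument above works for every $\alpha$; at $\alpha=1$ the identity still holds trivially. I would simply note that the hypothesis is inherited from the cited source and is not needed. There is no real obstacle here; the care required is just in handling the sign split correctly at the boundary case of equality $\rho_\sigma=\frac{2\alpha m}{n}$, which contributes zero to either side.
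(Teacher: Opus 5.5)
Your proof is correct and complete: the trace computation $\sum_i \rho_i = 2\alpha m$, the sign split at $\sigma$ (with $\rho_1 \geq \frac{2\alpha m}{n}$ guaranteeing that $\sigma$ exists), and the monotonicity of the partial sums $2\sum_{i\le k}(\rho_i - \frac{2\alpha m}{n})$ make up the standard argument for this identity. The paper does not prove the lemma itself but cites it from Pirzada et al., whose result rests on the same idea, and you are right that the hypothesis $\alpha<1$ is never used.
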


\begin{lemma}\textup{\cite{analytic_inequalities_by_mitrinovic}}\label{lem10}
    If $x_1, x_2, \ldots, x_p$ are real numbers such that $\sum_{i=1}^p|x_i| = 1$ and $\sum_{i=1}^p x_i = 0$, then \resizebox{.33\textwidth}{!}{$\displaystyle\big|\sum_{i=1}^p a_i x_i\big| \leq \frac{1}{2}\Big(\max_{1\leq i \leq p}a_i -\min_{1\leq i \leq p}a_i\Big)$},
    where $a_1, a_2, \ldots, a_p$ are real numbers.
\end{lemma}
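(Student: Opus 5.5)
This lemma is the classical inequality quoted from Mitrinović, so I would prove it directly; nothing graph-specific is needed. Split the indices into $P=\{i: x_i>0\}$ and $N=\{i: x_i<0\}$. The hypotheses $\sum_{i=1}^p x_i=0$ and $\sum_{i=1}^p|x_i|=1$ together give
\[
\sum_{i\in P}x_i=\frac12 \quad\text{and}\quad \sum_{i\in N}x_i=-\frac12 .
\]
Indices with $x_i=0$ contribute nothing and can be ignored.

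Write $M=\max_{1\le i\le p}a_i$ and $m_0=\min_{1\le i\le p}a_i$. Since $x_i>0$ on $P$, we have $\sum_{i\in P}a_ix_i\le M\sum_{i\in P}x_i=\frac M2$. Since $x_i<0$ on $N$, multiplying $a_i\ge m_0$ by $x_i$ reverses the inequality, so $\sum_{i\in N}a_ix_i\le m_0\sum_{i\in N}x_i=-\frac{m_0}{2}$. Adding the two estimates gives
\[
\sum_{i=1}^p a_ix_i\le \frac12(M-m_0).
\]

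For the lower bound, the same argument with the inequalities reversed gives $\sum_{i\in P}a_ix_i\ge \frac{m_0}{2}$ and $\sum_{i\in N}a_ix_i\ge -\frac M2$. Hence $\sum_{i=1}^p a_ix_i\ge -\frac12(M-m_0)$, and combining the two bounds yields the stated inequality for the absolute value.

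Alternatively, one can shift first. Because $\sum_{i=1}^p x_i=0$, we have $\sum_{i=1}^p a_ix_i=\sum_{i=1}^p (a_i-c)x_i$ for any constant $c$. Choosing $c=\frac{M+m_0}{2}$ gives $|a_i-c|\le\frac{M-m_0}{2}$ for every $i$, and then the triangle inequality with $\sum_{i=1}^p|x_i|=1$ finishes the proof in one line. I would present this shorter version. There is no real obstacle; the only care needed is handling the sign reversal on $N$ correctly in the first approach, and the shift trick avoids that entirely.
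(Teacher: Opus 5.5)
The paper does not prove this lemma; it quotes it from Mitrinovi\'c, so there is no in-paper argument to compare against. Your self-contained proof is correct in both versions. The hypotheses force $\sum_{i\in P}x_i=\frac12=-\sum_{i\in N}x_i$. You handle the sign reversal on $N$ correctly: $a_i\ge m_0$ with $x_i<0$ gives $a_ix_i\le m_0x_i$, and $a_i\le M$ gives $a_ix_i\ge Mx_i$. The centering trick with $c=\frac{M+m_0}{2}$ reduces everything to the triangle inequality and is the cleanest version to present.

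What writing out a proof gives you beyond the citation is the equality case, which the paper never uses. Tracking equality in your first argument shows that $\sum_i a_ix_i=\frac12(M-m_0)$ holds if and only if $a_i=M$ for every $i$ with $x_i>0$ and $a_i=m_0$ for every $i$ with $x_i<0$.

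In Theorem~\ref{r13} the lemma is applied with $a_i=\rho_i$ and $x_i$ proportional to $\rho_i-\frac{2\alpha m}{n}$. There $\sum_i a_ix_i=\sum_i\big(\rho_i-\frac{2\alpha m}{n}\big)^2/E_{A_\alpha}(G)>0$, and every other step of that proof is an identity. So for a graph with at least one edge, equality in \eqref{eq103} holds exactly when every $A_\alpha$-eigenvalue lies in $\{\rho_1,\frac{2\alpha m}{n},\rho_n\}$. This explains why $K_n$ and $K_{\frac{n}{2},\frac{n}{2}}$ attain the bound. It also shows that every regular complete multipartite graph attains it, since their adjacency spectrum is $\{r(s-1),0,-r\}$ with $\frac{2\alpha m}{n}=\alpha r(s-1)$.
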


\begin{lemma}\textup{\cite{commutativity_by_so}}\label{lem14}
    If $P$ and $Q$ both are $p\times p$ Hermitian matrices with $R=P+Q$, then 
    \begin{align*}
        &\lambda_i(R) \leq \lambda_j(P) + \lambda_{i-j+1}(Q), \quad 1 \leq j \leq i \leq p\\
        \text{and } \quad &\lambda_i(R) \geq \lambda_j(P) + \lambda_{i-j+p}(Q), \quad 1 \leq i \leq j \leq p.
    \end{align*}
    Equality in each inequality is satisfied if and only if for each of the three eigenvalues involved, there is a common eigenvector.
\end{lemma}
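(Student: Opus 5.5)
We prove the first inequality of Lemma~\ref{lem14} with the Courant--Fischer (Rayleigh quotient) characterization of eigenvalues and a dimension count, and obtain the second by a sign change. Fix orthonormal eigenbases: $Pu_k=\lambda_k(P)u_k$, $Qv_k=\lambda_k(Q)v_k$ and $Rw_k=\lambda_k(R)w_k$ for $k=1,\ldots,p$. For $1\le j\le i\le p$ consider the subspaces
\[
U=\operatorname{span}\{u_j,\ldots,u_p\},\quad V=\operatorname{span}\{v_{i-j+1},\ldots,v_p\},\quad W=\operatorname{span}\{w_1,\ldots,w_i\},
\]
of dimensions $p-j+1$, $p-i+j$ and $i$. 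Their dimensions sum to $2p+1$. Since $\dim(U\cap V\cap W)\ge \dim U+\dim V+\dim W-2p$, the intersection has dimension at least $1$, so it contains a unit vector $x$.

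On $U$ the Rayleigh quotient of $P$ is at most $\lambda_j(P)$, so $x^*Px\le\lambda_j(P)$. On $V$ that of $Q$ is at most $\lambda_{i-j+1}(Q)$, so $x^*Qx\le\lambda_{i-j+1}(Q)$. On $W$ that of $R$ is at least $\lambda_i(R)$, so $x^*Rx\ge\lambda_i(R)$. Since $x^*Rx=x^*Px+x^*Qx$, this gives $\lambda_i(R)\le\lambda_j(P)+\lambda_{i-j+1}(Q)$.

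For the second inequality, apply the first to $-R=(-P)+(-Q)$ and use $\lambda_k(-M)=-\lambda_{p-k+1}(M)$. Take $1\le i\le j\le p$. Apply the first inequality with index $i'=p-i+1$ in place of $i$ and $j'=p-j+1\le i'$ in place of $j$. It reads $\lambda_{p-i+1}(-R)\le\lambda_{p-j+1}(-P)+\lambda_{j-i+1}(-Q)$. Converting each term back gives $\lambda_i(R)\ge\lambda_j(P)+\lambda_{i-j+p}(Q)$, because $p-(j-i+1)+1=i-j+p$.

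For the equality case we trace back through the three Rayleigh estimates, and we expect this step to need the most care. Suppose equality holds. Then all three estimates are tight for the chosen $x$. A unit vector in $U$ with $x^*Px=\lambda_j(P)$ must lie in the $\lambda_j(P)$-eigenspace: expand $x=\sum_{k\ge j}c_ku_k$, so $x^*Px=\sum_{k\ge j}|c_k|^2\lambda_k(P)$, and this can equal $\lambda_j(P)$ only if $c_k=0$ whenever $\lambda_k(P)<\lambda_j(P)$. The same argument applies to $Q$ on $V$ and to $R$ on $W$, the latter being a minimum rather than a maximum. Hence $x$ is a common eigenvector for $\lambda_j(P)$, $\lambda_{i-j+1}(Q)$ and $\lambda_i(R)$. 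Repeated eigenvalues cause no difficulty, since the argument only uses the eigenspace of the extreme value.

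Conversely, suppose some $x\neq0$ satisfies $Px=\lambda_j(P)x$, $Qx=\lambda_{i-j+1}(Q)x$ and $Rx=\lambda_i(R)x$. Then $\lambda_i(R)x=Rx=Px+Qx=(\lambda_j(P)+\lambda_{i-j+1}(Q))x$, which forces equality. The equality statement for the second inequality follows from the same sign-change reduction to the first.
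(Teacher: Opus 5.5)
Your proof is correct: the dimension count $\dim(U\cap V\cap W)\ge 1$, the three Rayleigh-quotient estimates, the index bookkeeping in the sign change (where $\lambda_{j-i+1}(-Q)=-\lambda_{i-j+p}(Q)$), and the argument that equality forces a common eigenvector all check out, including when eigenvalues repeat. The paper gives no proof of this lemma; it quotes the result from So. Your argument is the standard Courant--Fischer subspace-intersection proof, and it supplies a self-contained justification for what the paper uses as a black box.
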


\begin{lemma}\textup{\cite{A_Q-merging_by_Nikiforov}}\label{lem17}
    Let $G$ be a connected graph with diameter d. If $0 \leq \alpha < 1$, then the number of distinct eigenvalues of $A_\alpha (G)$ is at least $d+1$.
\end{lemma}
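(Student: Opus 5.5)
This statement is a preliminary result quoted from the literature \cite{A_Q-merging_by_Nikiforov}. The plan is the standard argument that the number of distinct eigenvalues of a symmetric nonnegative matrix is at least one more than the diameter of its support graph.

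Let $M = A_\alpha(G)$. Since $M$ is real symmetric, it is diagonalizable. Hence, if $M$ has exactly $r$ distinct eigenvalues $\theta_1,\ldots,\theta_r$, its minimal polynomial $\prod_{i=1}^r (x-\theta_i)$ has degree $r$. The identity matrix $I$ and the powers $M, M^2, \ldots, M^{r}$ are then linearly dependent. In fact, $M^r$ is a linear combination of $I, M, \ldots, M^{r-1}$, and inductively every power $M^k$ lies in the span of $I, M, \ldots, M^{r-1}$.

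The key combinatorial step is to show that $I, M, \ldots, M^{d}$ are linearly independent when $0\le \alpha<1$. Choose vertices $u,v$ at distance $d$ and a shortest path $u=w_0,w_1,\ldots,w_d=v$. The off-diagonal entries of $M$ are $(1-\alpha)$ times those of $A(G)$, which is strictly positive on edges because $\alpha<1$. The diagonal entries $\alpha d_G(w)$ are nonnegative. Expanding $(M^k)_{ij}$ as a sum over walks of length $k$, where each step either stays at a vertex (diagonal entry) or moves along an edge, every term is nonnegative. Two facts follow.
\begin{enumerate}
\item $(M^k)_{w_0 w_j} = 0$ for $k<j$, since reaching $w_j$ requires at least $j$ edge steps, as $\mathrm{dist}(w_0,w_j)=j$.
\item $(M^j)_{w_0 w_j} \ge (1-\alpha)^j > 0$, witnessed by the path itself.
\end{enumerate}
Now suppose $\sum_{k=0}^{d} c_k M^k = 0$ with $c_d$ the last nonzero coefficient, say $c_t$, $t\ge 1$. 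Evaluating the $(w_0,w_t)$ entry gives $c_t (M^t)_{w_0w_t} = 0$, so $c_t = 0$, a contradiction. If $t=0$, then $c_0 I = 0$ also forces $c_0=0$. Hence $I,\ldots,M^d$ are independent, so the minimal polynomial has degree at least $d+1$, and $r\ge d+1$.

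The only delicate point is the nonnegativity needed so that walk contributions cannot cancel. This holds because $\alpha\in[0,1)$ makes every entry of $M$ nonnegative and every edge entry positive. The hypothesis $\alpha<1$ is essential: at $\alpha=1$ the matrix $M=D$ fails the positivity of edge entries, and the claim can break (e.g. for regular graphs). Connectedness guarantees that $d$ is finite.
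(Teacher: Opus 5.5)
Your proof is correct. The paper gives no proof of its own and simply cites Nikiforov, and your argument is the standard one for that result (and for the classical adjacency case): the minimal polynomial has degree equal to the number of distinct eigenvalues, and $I,M,\ldots,M^d$ are independent because $A_\alpha(G)$ is entrywise nonnegative with edge entries $1-\alpha>0$. The only blemish is the garbled phrase ``with $c_d$ the last nonzero coefficient, say $c_t$'', which should just read ``let $c_t$ be the last nonzero coefficient''.
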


\begin{lemma}\textup{\cite{bounds_on_A_alpha_spread_by_lin}}\label{lem22}
    Let $G$ be a graph with $n$ vertices and $m$ edges. If $0 \leq \alpha \leq 1$, then
    \begin{align*}
        \Theta_{A_\alpha}(G) \leq \sqrt{2\alpha^2 Z_1 + 4(1-\alpha)^2m - \frac{8\alpha^2 m^2}{n}}.
    \end{align*}
    Equality holds for $G \cong K_{\frac{n}{2}, \frac{n}{2}}$.
\end{lemma}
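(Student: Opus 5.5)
The plan is to work with the shifted eigenvalues $x_i=\rho_i-\frac{2\alpha m}{n}$, $i=1,\ldots,n$. These satisfy $x_1-x_n=\rho_1-\rho_n=\Theta_{A_\alpha}(G)$. Since $\mathrm{tr}\,A_\alpha(G)=\alpha\,\mathrm{tr}\,D(G)=2\alpha m$, they also satisfy $\sum_{i=1}^n x_i=0$. The bound should then follow from an elementary inequality for real numbers with zero sum, combined with an exact computation of $\sum x_i^2$ from traces.

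\textbf{Step 1 (second moment).} Expand $\mathrm{tr}\,A_\alpha(G)^2=\alpha^2\mathrm{tr}\,D^2+2\alpha(1-\alpha)\mathrm{tr}(DA)+(1-\alpha)^2\mathrm{tr}\,A^2$. Here $\mathrm{tr}\,D^2=Z_1$, $\mathrm{tr}(DA)=0$ because $A$ has zero diagonal, and $\mathrm{tr}\,A^2=2m$. Hence
\begin{align*}
\sum_{i=1}^n x_i^2=\sum_{i=1}^n\rho_i^2-\frac{(2\alpha m)^2}{n}=\alpha^2Z_1+2(1-\alpha)^2m-\frac{4\alpha^2m^2}{n}.
\end{align*}

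\textbf{Step 2 (spread versus second moment).} We have $(x_1-x_n)^2\le 2(x_1^2+x_n^2)\le 2\sum_{i=1}^n x_i^2$. The first inequality is $(x_1+x_n)^2\ge0$, and the second holds because the dropped terms are squares. (For $n=1$ both sides are zero.) Combining this with Step 1 gives
\begin{align*}
\Theta_{A_\alpha}(G)^2\le 2\alpha^2Z_1+4(1-\alpha)^2m-\frac{8\alpha^2m^2}{n},
\end{align*}
and taking square roots yields the claimed bound.

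\textbf{Step 3 (equality).} Equality in Step 2 forces $x_1=-x_n$ and $x_i=0$ for $2\le i\le n-1$. For $G\cong K_{a,a}$ with $n=2a$ we have $D=aI$ and $A$-spectrum $\{a,0^{[n-2]},-a\}$. So $\mathrm{spec}(A_\alpha)=\{\alpha a+(1-\alpha)a,\ \alpha a^{[n-2]},\ \alpha a-(1-\alpha)a\}$, and $\frac{2\alpha m}{n}=\alpha a$. Thus the shifted spectrum is $\{(1-\alpha)a,0^{[n-2]},-(1-\alpha)a\}$, which is exactly the equality pattern.

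The only step needing care is the trace expansion in Step 1, in particular recognising that the cross term $\mathrm{tr}(DA)$ vanishes. Everything else is routine.
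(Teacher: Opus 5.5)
Your proof is correct and complete. The second-moment computation $\sum_i\rho_i^2=\alpha^2Z_1+2(1-\alpha)^2m$ is right, including the observation that $\mathrm{tr}(DA)=0$. So is the centering at the mean $\frac{2\alpha m}{n}$, and the chain $(x_1-x_n)^2\le 2(x_1^2+x_n^2)\le 2\sum_i x_i^2$ is valid for $n\ge 2$, with the $n=1$ case handled separately. Your check that $K_{a,a}$ has shifted spectrum $\{(1-\alpha)a,0^{[n-2]},-(1-\alpha)a\}$ matches exactly the equality pattern of that chain.

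The paper does not prove this lemma at all; it is quoted from Lin, Miao and Guo, so there is no internal argument to compare yours with. What you wrote is Mirsky's spread inequality $\Theta(M)^2\le 2\|M\|_F^2-\frac{2}{n}(\mathrm{tr}\,M)^2$ for a real symmetric $M$, proved from scratch and specialised to $M=A_\alpha(G)$. This is presumably how the cited source obtains the bound.

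A minor remark: the zero-sum property is not needed in Step 2, since the chain holds for arbitrary reals. Its only role is that $\frac{2\alpha m}{n}$ is the shift minimising $\sum_i(\rho_i-c)^2$, which gives the smallest radicand. Your Step 1 also shows the radicand equals $2\sum_i x_i^2\ge 0$, so the square root is always defined.
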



\section{Upper bounds for $A_{\alpha}$-energy of a graph}

We begin this section by obtaining an upper bound for $E_{A_{\alpha}}(G)$ that depends solely on the number of vertices and edges, making it both elementary and widely applicable.

\begin{theorem}\label{r11}
    Let $G$ be a graph on $n \geq 2$ vertices. If $0 \leq \alpha < 1$, then
    \begin{align}\label{eq105}
        E_{A_{\alpha}}(G) \leq 
        \begin{cases}
            4\alpha m\big(1-\frac{1}{n}\big), & \text{if $\alpha \geq \frac{n}{2(n-1)}$}.\\
            2m\big(1-\frac{2\alpha}{n}\big), & \text{if $\alpha < \frac{n}{2(n-1)}$}.
        \end{cases}
    \end{align}
    The graph $K_2$ satisfies the equality for all $\alpha \in [0,1)$.
\end{theorem}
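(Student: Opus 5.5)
The plan is to work with the trace-zero shift of $A_\alpha(G)$ and use Lemma~\ref{lem12}, which writes the energy as
\[
E_{A_\alpha}(G)=2S_{A_\alpha}^{(\sigma)}(G)-\frac{4\alpha m\sigma}{n}, \qquad \sigma\geq 1 .
\]
Everything then reduces to bounding the partial sum $S_{A_\alpha}^{(\sigma)}(G)$ of the largest eigenvalues by a multiple of $m$. At the end I use $\sigma\geq 1$, or the complementary count $n-\sigma\leq n-1$, to produce the factors $1-\frac{2\alpha}{n}$ and $1-\frac1n$. Throughout, if $m=0$ then all $\rho_i=0$ and both sides vanish, so I assume $m\geq1$.

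\emph{Regime $\alpha\le \frac12$.} Write
\[
A_\alpha(G)=2\alpha\cdot\tfrac12 L_S(G)+(1-2\alpha)A(G),
\]
a combination with nonnegative coefficients, and apply Lemma~\ref{lem1}. Since $\frac12L_S(G)$ is positive semidefinite with trace $m$, any sum of its top $k$ eigenvalues is at most $m$. For $A(G)$, any top-$k$ partial sum is at most the sum of its positive eigenvalues, which equals $\mathfrak{E}(G)/2\leq m$ by the classical bound $\mathfrak{E}(G)\leq 2m$. Hence $S_{A_\alpha}^{(\sigma)}\leq m$, so $E_{A_\alpha}(G)\leq 2m-\frac{4\alpha m}{n}$. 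This is the second branch.

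\emph{Regime $\alpha\geq\frac12$.} Here $A_\alpha(G)$ is positive semidefinite, being a nonnegative combination of $\frac12L_S$ and $D$, so all $\rho_i\geq0$. Use the other half of the zero-trace identity:
\[
E_{A_\alpha}(G)=2\sum_{i>\sigma}\Big(\tfrac{2\alpha m}{n}-\rho_i\Big)\leq 2(n-\sigma)\tfrac{2\alpha m}{n}\leq 4\alpha m\Big(1-\tfrac1n\Big).
\]
Equivalently, $S_{A_\alpha}^{(\sigma)}\leq 2\alpha m$ in the formula above gives the same bound. This is the first branch.

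\emph{Main obstacle.} The hard part is matching the stated threshold $\frac{n}{2(n-1)}$ rather than $\frac12$. For $\frac12\leq\alpha<\frac{n}{2(n-1)}$ the claimed bound $2m(1-\frac{2\alpha}{n})$ is smaller than $4\alpha m(1-\frac1n)$. So in that window I need $S_{A_\alpha}^{(\sigma)}\leq m$ even though $\alpha>\frac12$. I would first try writing
\[
A_\alpha(G)=(2-2\alpha)\cdot\tfrac12L_S(G)+(2\alpha-1)D(G)
\]
and controlling the $D$-part by a sharper count that uses $\sigma\leq n-1$. If that fails, I would combine the two expressions for $E_{A_\alpha}(G)$ (via $\sigma$ and via $n-\sigma$), taking a convex combination of the two estimates to interpolate. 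If neither works, I would recheck whether the statement really intends the branch switch at $\frac12$. The threshold $\frac{n}{2(n-1)}$ is exactly where $4\alpha m(1-\frac1n)=2m$, which suggests the authors compared against $2m$, not against $2m(1-\frac{2\alpha}{n})$.

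\emph{Equality for $K_2$.} Here $\operatorname{spec}(A_\alpha(K_2))=\{1,\,2\alpha-1\}$, so
\[
E_{A_\alpha}(K_2)=|1-\alpha|+|\alpha-1|=2(1-\alpha).
\]
This equals $2m(1-\frac{2\alpha}{n})$ with $m=1$ and $n=2$. The first branch is vacuous, since its threshold is $\alpha\ge1$, so this gives equality for all $\alpha\in[0,1)$.
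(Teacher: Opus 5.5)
Your two regimes are correct as far as they go. For $\alpha\le\frac12$, splitting $A_\alpha(G)$ via Ky Fan into $\frac12L_S(G)$ and $A(G)$, together with $\mathfrak{E}(G)\le 2m$, does give $S_{A_\alpha}^{(\sigma)}(G)\le m$ and hence the second branch. For $\alpha\ge\frac12$, positive semidefiniteness gives $4\alpha m(1-\frac1n)$, which covers the first branch. The gap is the window $\frac12<\alpha<\frac{n}{2(n-1)}$, which is nonempty for every $n\ge 2$. There the statement asserts $2m(1-\frac{2\alpha}{n})$, but you have only proved $4\alpha m(1-\frac1n)$, which is strictly larger in that range. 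Your ``main obstacle'' paragraph lists things to try rather than giving an argument, so the proof is incomplete exactly there. Your fallback suspicion that the switch should be at $\frac12$ is also mistaken: the theorem is true as stated.

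The paper avoids the issue by decomposing over edges instead of over $L_S$ and $D$. It writes $A_\alpha(G)-\frac{2\alpha m}{n}I_n=\sum_{e\in E(G)}\bigl(A_\alpha(G(e))-\frac{2\alpha}{n}I_n\bigr)$ and applies subadditivity of matrix energy (Lemma~\ref{lem9}). Each summand has spectrum $\{1-\frac{2\alpha}{n},\,2\alpha-1-\frac{2\alpha}{n},\,(-\frac{2\alpha}{n})^{[n-2]}\}$. The absolute values sum to $2(1-\frac{2\alpha}{n})$ exactly when $2\alpha-1-\frac{2\alpha}{n}<0$, i.e.\ $\alpha<\frac{n}{2(n-1)}$; that sign change is where the threshold comes from.

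You can also close the window inside your own framework by splitting on $\sigma$ instead of only using $\sigma\ge1$:
\begin{enumerate}
\item If $\sigma=1$, then $E_{A_\alpha}(G)=2\rho_1-\frac{4\alpha m}{n}\le 2m-\frac{4\alpha m}{n}$. This holds because $\rho_1\le\Delta\le m$: the row sums of the nonnegative matrix $A_\alpha(G)$ are the vertex degrees.
\item If $\sigma\ge2$ and $\alpha\ge\frac12$, your positive-semidefinite estimate with $n-\sigma\le n-2$ gives $E_{A_\alpha}(G)\le 4\alpha m(1-\frac2n)$. Moreover, $4\alpha m(1-\frac2n)\le 2m(1-\frac{2\alpha}{n})$ is equivalent to $\alpha\le\frac{n}{2(n-1)}$.
\end{enumerate}
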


\begin{proof}
    We take $V(G)=\{v_1, v_2, \ldots, v_n\}$. Let $G(e)$ be the spanning subgraph of $G$ containing only one edge $e=\{u,v\}$. If $A_\alpha(G(e))=(a_{ij})$ is the $A_\alpha$-matrix of $G(e)$, then
    \begin{align*}
        a_{ij}=
        \begin{cases}
            \alpha \quad &\text{ if } v_i=v_j=u \text{ or } v_i=v_j=v\\
            1-\alpha \quad &\text{ if } v_i=u, v_j=v \text{ or } v_i=v, v_j=u\\
            0 \quad &\text{ otherwise}.
        \end{cases}
    \end{align*}
    It is easy to see that $A_\alpha(G)=\sum_{e \in E(G)}A_\alpha(G(e))$. From Lemma \ref{lem21}, $E_{A_{\alpha}}(G) = \sum_{i=1}^n|\rho_i -\frac{2\alpha m}{n}|= \mathcal{E}\big(A_\alpha(G) - \frac{2\alpha m}{n}I_n\big)$. Therefore $E_{A_{\alpha}}(G) =\mathcal{E}\big(\sum_{e\in E(G)}A_\alpha(G(e)) - \frac{2\alpha m}{n}I_n\big)=\mathcal{E}\big(\sum_{e\in E(G)}\big[A_\alpha(G(e))-\frac{2\alpha}{n}I_n\big]\big).$
    Applying Lemma \ref{lem9}, we get $E_{A_{\alpha}}(G) \leq \sum_{e\in E(G)} \linebreak \mathcal{E}\big(A_\alpha(G(e))-\frac{2\alpha}{n}I_n\big)$.
    It is easy to see that for every $e\in E(G)$, the spectrum of the matrix $A_\alpha(G(e)) -\frac{2\alpha}{n}I_n$ is ${\big\{1-\frac{2\alpha}{n}}$, ${2\alpha-1-\frac{2\alpha}{n},-\frac{2\alpha}{n}^{[n-2]}\big\}}$. Therefore $E_{A_{\alpha}}(G) \leq m\big[|1-\frac{2\alpha}{n}| +|2\alpha-1-\frac{2\alpha}{n}| +|\frac{2\alpha}{n}|(n-2)\big]$, by using Lemma \ref{lem21}. For $0\leq \alpha <1$ and $n \geq 2$, $|1-\frac{2\alpha}{n}|=(1-\frac{2\alpha}{n})$, $|\frac{2\alpha}{n}|=\frac{2\alpha}{n}$ and \resizebox{.5\textwidth}{!}{$|2\alpha-1-\frac{2\alpha}{n}| =
        \begin{cases}
            \quad 2\alpha-1-\frac{2\alpha}{n}, \quad &\text{if } \alpha \geq \frac{n}{2(n-1)}\\
            -(2\alpha-1-\frac{2\alpha}{n}), \quad &\text{if } \alpha < \frac{n}{2(n-1)}
        \end{cases}$}. Using these in the last inequality and then simplifying it, we achieve the required bound.

        To verify the equality case, we observe that $\frac{n}{2(n-1)} =\frac{2}{2(2-1)}=1$ for $K_2$, and since $0 \leq \alpha < 1$, the right hand expression of \eqref{eq105} becomes $2m\big(1-\frac{2\alpha}{n}\big) = 2 \times 1 \times (1-\frac{2\alpha}{2})=2(1-\alpha)$. On the other hand, $\text{spec}(A_\alpha(K_2)) = \{1, {2\alpha -1}\}$, which yields $E_{A_\alpha}(K_2) = 2(1-\alpha)$. Thus $K_2$ satisfies the equality for all $\alpha \in [0,1)$.
\end{proof}

\begin{remark}\label{r103}
\rm{
    There is a well-established upper bound for the $A_\alpha$-energy of a graph \cite[Theorem $2.6$]{A_alpha_energy_and_zagreb_by_pirzada}. It shows that for $0\leq \alpha < 1$,
    \begin{align}\label{eq106}
        E_{A_\alpha}(G) \leq \sqrt{2(1-\alpha)^2mn + \alpha^2n\sum_{i=1}^{n}\Big(d_i - \frac{2m}{n}\Big)^2},
    \end{align}
    where $d_1 \geq d_2 \geq \ldots \geq d_n$ is the degree sequence of the graph $G$. Compared to \eqref{eq106}, Theorem \ref{r11} provides an upper bound that relies on fewer parameters, as it does not involve the degree sequence of the graph. Moreover, for various graphs like stars and double stars, numerical investigation indicates that Theorem \ref{r11} yields better bound than \eqref{eq106}. For example, for the star graph $S_{20}$ with $\alpha = 0.60$, Theorem \ref{r11} gives an upper bound of $43.32$, whereas \eqref{eq106} yields $48.35$. Likewise, for the double star $S_{12,21}$ with $\alpha = 0.70$, Theorem \ref{r11} provides an upper bounds of $92.48$ compared with $98.56$ from \eqref{eq106}.
}
\end{remark}

Next we present two lemmas on $S_{A_{\alpha}}^{(k)}(G)$. Together they yield a sharp upper bound for $E_{A_\alpha}(G)$, proved in the subsequent theorem. Throughout, if $k > n$, we interpret $S_{A_{\alpha}}^{(k)}(G)$ as $S_{A_{\alpha}}^{(n)}(G)$. 

\begin{lemma}\label{lem2}
    Let $G_1, G_2, \ldots, G_l$, $l \geq 1$, be edge disjoint subgraphs of $G$ with $E(G) = \bigcup_{i=1}^{l}E(G_i)$. Then for any $\frac{1}{2} \leq \alpha < 1$ and $1\leq k \leq n$,
    \begin{align*}
        S_{A_{\alpha}}^{(k)}(G) \leq \sum_{i=1}^{l} S_{A_{\alpha}}^{(k)}(G_i).
    \end{align*}
\end{lemma}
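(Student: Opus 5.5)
The plan is to realize $A_\alpha(G)$ as a sum of matrices indexed by the subgraphs $G_i$, and then apply Fan's inequality (Lemma \ref{lem1}). Regard each $G_i$ as a spanning subgraph of $G$, i.e. add the missing vertices as isolated vertices so that it has vertex set $V(G)$. Write $\widetilde{A}_i$ for the resulting $n\times n$ matrix $A_\alpha$.

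The first step is the decomposition. The edge sets of the $G_i$ partition $E(G)$. Hence $A(G)=\sum_i A(\widetilde G_i)$ and $D(G)=\sum_i D(\widetilde G_i)$, since every vertex degree splits additively over the edge-disjoint pieces. Consequently $A_\alpha(G)=\sum_{i=1}^l \widetilde{A}_i$.

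The second step applies Lemma \ref{lem1} repeatedly, by induction on $l$. This gives
\begin{align*}
S_{A_\alpha}^{(k)}(G)=\sum_{j=1}^k\lambda_j\Big(\sum_{i=1}^l \widetilde A_i\Big)\leq \sum_{i=1}^l\sum_{j=1}^k\lambda_j(\widetilde A_i).
\end{align*}

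The third step, which is where the hypothesis $\alpha\ge \frac12$ enters, compares $\sum_{j\le k}\lambda_j(\widetilde A_i)$ with $S^{(k)}_{A_\alpha}(G_i)$, the latter computed on $G_i$'s own vertex set of size $n_i$. The spectrum of $\widetilde A_i$ is the spectrum of $A_\alpha(G_i)$ together with $n-n_i$ zeros. For $\alpha\geq\frac12$ the matrix $A_\alpha(G_i)$ is positive semidefinite. Indeed, $A_\alpha=(1-\alpha)L_S+(2\alpha-1)D$, a nonnegative combination of positive semidefinite matrices; this is Nikiforov's observation and is easy to verify directly.

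Therefore, when $k\le n_i$, the top $k$ eigenvalues of $\widetilde A_i$ are exactly the top $k$ eigenvalues of $A_\alpha(G_i)$. When $k>n_i$, they are all eigenvalues of $A_\alpha(G_i)$ plus zeros. This matches the stated convention $S^{(k)}=S^{(n_i)}$ for $k>n_i$. In either case $\sum_{j\le k}\lambda_j(\widetilde A_i)=S_{A_\alpha}^{(k)}(G_i)$. Summing over $i$ then yields the lemma.

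The main obstacle is precisely this padding-by-zeros step. Without nonnegativity of the eigenvalues, the inserted zeros could displace negative eigenvalues of $A_\alpha(G_i)$ from the top $k$ positions and break the identification. This is why the lemma is restricted to $\alpha\ge\frac12$.
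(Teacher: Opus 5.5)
Your proposal is correct and takes the same route as the paper. You pad each $G_i$ with isolated vertices, write $A_\alpha(G)=\sum_i A_\alpha(G_i')$, and apply Ky Fan's inequality (Lemma~\ref{lem1}); then positive semidefiniteness for $\frac12\le\alpha<1$ removes the padding, giving $S^{(k)}_{A_\alpha}(G_i')=S^{(k)}_{A_\alpha}(G_i)$. Your explicit semidefiniteness argument via $A_\alpha=(1-\alpha)L_S+(2\alpha-1)D$, and your use of the stated convention for $k>n_i$, are details the paper leaves implicit.
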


\begin{proof}
Let $|V(G_i)| = n_i$, $i=1,2, \ldots, l$. We construct the graph $G_i^\prime$ by adding $(n-n_i)$ isolated vertices to the graph $G_i$. Then $S_{A_{\alpha}}^{(k)}(G) = \sum_{j=1}^{k}\rho_j(G) = \sum_{j=1}^{k}\lambda_j(A_\alpha(G)) = \sum_{j=1}^{k}\lambda_j\Big(\sum_{i=1}^{l}A_\alpha(G_i^\prime)\Big)$. Using Lemma \ref{lem1}, $S_{A_{\alpha}}^{(k)}(G) \leq \sum_{i=1}^{l}\sum_{j=1}^{k}\lambda_j(A_\alpha(G_i^\prime)) =\linebreak \sum_{i=1}^{l}S_{A_{\alpha}}^{(k)}(G_i^\prime)$. When $\frac{1}{2} \leq \alpha < 1$, the $A_\alpha$-matrix for a graph is a positive semidefinite matrix i.e., all the $A_\alpha$-eigenvalues are non-negative. Then $S_{A_{\alpha}}^{(k)}(G_i^\prime) =S_{A_{\alpha}}^{(k)}(G_i)$, $i=1, 2, \ldots, l$ and hence the result follows. \qedhere
\end{proof}

\begin{lemma}\label{r4}
    Let $G$ be a graph. If $\frac{1}{2} \leq \alpha < 1$, then for $k=1,2, \ldots, n,$
    \begin{align*}
        \resizebox{.95\textwidth}{!}{$S_{A_{\alpha}}^{(k)}(G) \leq \frac{1}{2}\bigg( \alpha(4m -3\Delta +2k -1) + \sqrt{\alpha^2 (\Delta +1)^2 + 4\Delta (1-2\alpha)} \bigg) -\left\lfloor \frac{1}{k}\right\rfloor (2\alpha -1)(m-\Delta)$}.
    \end{align*}
\end{lemma}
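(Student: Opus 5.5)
The plan is to apply Lemma \ref{lem2} to a specific edge decomposition of $G$: one star at a maximum-degree vertex, plus every remaining edge taken separately as a copy of $K_2$. Let $v$ be a vertex of degree $\Delta$. Take $G_1 \cong S_{\Delta+1}=K_{1,\Delta}$ spanned by $v$ and its $\Delta$ incident edges. Take $G_2,\ldots,G_{m-\Delta+1}$ to be the single-edge subgraphs $K_2$ on the remaining $m-\Delta$ edges. These are edge disjoint and cover $E(G)$. Since $\frac12\le\alpha<1$, Lemma \ref{lem2} gives
$$S_{A_\alpha}^{(k)}(G)\le S_{A_\alpha}^{(k)}(K_{1,\Delta})+(m-\Delta)\,S_{A_\alpha}^{(k)}(K_2),$$
where $S^{(k)}$ means $S^{(n_i)}$ when $k$ exceeds the order $n_i$ of the piece. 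The case $\Delta=0$ (empty graph) is trivial and I would dispose of it separately.

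The $K_2$ term is immediate. Since $\text{spec}(A_\alpha(K_2))=\{1,2\alpha-1\}$, we have $S_{A_\alpha}^{(1)}(K_2)=1$ and $S_{A_\alpha}^{(k)}(K_2)=2\alpha$ for $k\ge2$. In both cases this equals $2\alpha-\lfloor 1/k\rfloor(2\alpha-1)$. Summing over the $m-\Delta$ edges gives $2\alpha(m-\Delta)-\lfloor 1/k\rfloor(2\alpha-1)(m-\Delta)$, which already produces the floor term in the statement.

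For the star, I would compute the spectrum of $A_\alpha(K_{1,\Delta})$ directly. It has eigenvalue $\alpha$ with multiplicity $\Delta-1$, coming from vectors on the leaves summing to zero. The other two eigenvalues are the roots of $x^2-\alpha(\Delta+1)x+\Delta(2\alpha-1)=0$, namely
$$\tfrac12\Big(\alpha(\Delta+1)\pm\sqrt{\alpha^2(\Delta+1)^2+4\Delta(1-2\alpha)}\Big).$$
The key claim is that every eigenvalue other than the largest root $\rho_1$ is at most $\alpha$. For the smaller root, I would evaluate the quadratic at $x=\alpha$, obtaining $-\Delta(1-\alpha)^2\le 0$, so $\alpha$ lies between the two roots. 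Hence $S_{A_\alpha}^{(k)}(K_{1,\Delta})\le\rho_1+(k-1)\alpha$ for every $k$. This holds also when $k>\Delta+1$: the remaining eigenvalues are nonnegative (positive semidefiniteness), there are only $\Delta\le k-1$ of them, and each is at most $\alpha$.

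Adding the two contributions and simplifying gives
$$\tfrac12\alpha(\Delta+1)+(k-1)\alpha+2\alpha(m-\Delta)=\tfrac12\alpha(4m-3\Delta+2k-1),$$
plus half the square root and minus the floor term, which is exactly the stated bound. I expect the main obstacle to be the uniform bound on the star's partial sums across all $k$, including $k$ larger than $\Delta+1$; the evaluation of the quadratic at $\alpha$ is what settles it.
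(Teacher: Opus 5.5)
Your proposal is correct and follows essentially the same route as the paper: it decomposes $E(G)$ into a star $K_{1,\Delta}$ at a maximum-degree vertex plus $m-\Delta$ copies of $K_2$, applies Lemma \ref{lem2}, and bounds the star's partial sums by $\rho_1+(k-1)\alpha$ and the $K_2$ partial sums by $2\alpha-\lfloor 1/k\rfloor(2\alpha-1)$. Two small points go beyond the paper. First, evaluating the characteristic quadratic at $x=\alpha$ to get $-\Delta(1-\alpha)^2\le 0$ proves that the smaller root is at most $\alpha$, which the paper only asserts. Second, treating $\Delta=0$ separately covers a degenerate case the paper passes over, where the star spectrum formula does not literally apply.
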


\begin{proof}
    Maximum degree of $G$ is $\Delta$, therefore $K_{1,\Delta}$ is a subgraph of $G$. Applying Lemma \ref{lem2} on $G$, we have for $\frac{1}{2} \leq \alpha < 1$, $S_{A_{\alpha}}^{(k)}(G) \leq S_{A_{\alpha}}^{(k)}(K_{1,\Delta}) + (m-\Delta) S_{A_{\alpha}}^{(k)}(K_2)$, $k=1,2,\ldots,n$. From \cite{A_Q-merging_by_Nikiforov}, $\text{spec}(A_\alpha(K_{1,\Delta}))=\Big\{ \frac{1}{2}\Big(\alpha(\Delta +1) + \sqrt{\alpha^2(\Delta +1)^2 +4\Delta(1-2\alpha)}\Big), \alpha^{[\Delta -1]}, \frac{1}{2}\Big(\alpha(\Delta +1)-\sqrt{\alpha^2(\Delta +1)^2 +4\Delta(1-2\alpha)}\Big)\Big\}$. When $k=1,2, \ldots, \Delta$, $S_{A_{\alpha}}^{(k)}(K_{1,\Delta}) = \frac{1}{2}\Big(\alpha(\Delta +1) + \linebreak \sqrt{\alpha^2(\Delta +1)^2 +4\Delta(1-2\alpha)}\Big) + (k-1)\alpha$ and when $k=\Delta +1,\Delta +2, \ldots, n$, $S_{A_{\alpha}}^{(k)}(K_{1,\Delta}) \leq \frac{1}{2}\Big(\alpha(\Delta +1) + \sqrt{\alpha^2(\Delta +1)^2 +4\Delta(1-2\alpha)}\Big) + (k-1)\alpha$, because we know $\alpha \geq \frac{1}{2}\Big(\alpha(\Delta +1) -\sqrt{\alpha^2(\Delta +1)^2 +4\Delta(1-2\alpha)}\Big)$. Therefore for any $1\leq k \leq n$, $S_{A_{\alpha}}^{(k)}(K_{1,\Delta}) \leq \frac{1}{2}\Big(\alpha(\Delta +1) + \sqrt{\alpha^2(\Delta +1)^2 +4\Delta(1-2\alpha)}\Big) + (k-1)\alpha$, $\frac{1}{2} \leq \alpha < 1$.

    Again from \cite{A_Q-merging_by_Nikiforov}, we have $\text{spec}(A_\alpha(K_2)) = \{1, {2\alpha -1}\}$. Thus $S_{A_{\alpha}}^{(k)}(K_2) = 1+\big(1-\left\lfloor \frac{1}{k} \right\rfloor\big)(2\alpha -1)$, $k=1,2,\ldots,n$. Using this and the last relation involving $S_{A_{\alpha}}^{(k)}(K_{1,\Delta})$ into $S_{A_{\alpha}}^{(k)}(G) \leq S_{A_{\alpha}}^{(k)}(K_{1,\Delta}) + (m-\Delta) S_{A_{\alpha}}^{(k)}(K_2)$, we get $S_{A_{\alpha}}^{(k)}(G) \leq \frac{1}{2}\Big(\alpha(\Delta +1) + \linebreak \sqrt{\alpha^2(\Delta +1)^2 +4\Delta(1-2\alpha)}\Big) + (k-1)\alpha + (m-\Delta)\Big(1+\big(1-\left\lfloor \frac{1}{k} \right\rfloor\big)(2\alpha -1)\Big)$, $\frac{1}{2} \leq \alpha < 1$ and $k=1,2, \ldots, n$. The result follows from here just by rearranging the terms.
\end{proof}

\begin{theorem}\label{r12}
    Let $G$ be a connected graph with $n \geq 2$. If $\frac{1}{2} < \alpha < 1$, then
    \begin{align}\label{eq101}
         E_{A_\alpha}(G) \leq \alpha (4m -3\Delta - \frac{4m}{n} +1) + \sqrt{\alpha^2(\Delta +1)^2 +4(1-2\alpha)\Delta}.
    \end{align}
    Equality is satisfied if and only if $G \cong K_{1,n-1}.$
\end{theorem}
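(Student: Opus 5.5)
The plan is to combine Lemma~\ref{lem12}, which writes $E_{A_\alpha}(G)$ as a maximum over $k$, with the bound on $S_{A_\alpha}^{(k)}(G)$ from Lemma~\ref{r4}, and then to show that the resulting expression is at most the right-hand side of \eqref{eq101} for every $k$.

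\textbf{Step 1: bound each term.} By Lemma~\ref{lem12},
\[
E_{A_\alpha}(G)=\max_{1\le k\le n}\Big\{2S_{A_\alpha}^{(k)}(G)-\tfrac{4\alpha mk}{n}\Big\}.
\]
Write $\sqrt{\cdot}$ for $\sqrt{\alpha^2(\Delta+1)^2+4\Delta(1-2\alpha)}$. Substituting Lemma~\ref{r4} gives, for each $k$,
\[
2S_{A_\alpha}^{(k)}(G)-\tfrac{4\alpha mk}{n}\le \alpha(4m-3\Delta-1)+\sqrt{\cdot}+2\alpha k\Big(1-\tfrac{2m}{n}\Big)-2\left\lfloor\tfrac1k\right\rfloor(2\alpha-1)(m-\Delta).
\]

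\textbf{Step 2: use connectivity.} Since $G$ is connected, $m\ge n-1\ge n/2$ for $n\ge 2$. Hence $1-\frac{2m}{n}\le 0$, so the term $2\alpha k(1-\frac{2m}{n})$ is nonincreasing in $k$.

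\textbf{Step 3: the case $k=1$.} Here the bound equals
\[
\alpha\Big(4m-3\Delta+1-\tfrac{4m}{n}\Big)+\sqrt{\cdot}-2(2\alpha-1)(m-\Delta).
\]
Because $\alpha>\frac12$ and $m\ge\Delta$, the last term is nonpositive. So this is at most the right-hand side of \eqref{eq101}, with equality only if $m=\Delta$.

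\textbf{Step 4: the case $k\ge 2$.} The floor term vanishes. By the monotonicity from Step 2, the bound is at most its value at $k=2$, namely $\alpha(4m-3\Delta+3-\frac{8m}{n})+\sqrt{\cdot}$. This differs from the target by $\alpha(2-\frac{4m}{n})\le 0$. Therefore every term in the maximum is bounded by \eqref{eq101}, which proves the inequality.

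\textbf{Step 5: equality.} This is the step requiring the most care. Suppose equality holds. Some $k$ must attain the maximum with equality throughout the chain above.
\begin{itemize}
\item If that $k$ is at least $2$, we need $m=n/2$ together with $m\ge n-1$. This forces $n=2$, so $G\cong K_2=K_{1,1}$.
\item If $k=1$, we need $m=\Delta$. For a connected graph this forces $G$ to be a tree whose edges all meet one vertex, i.e. $G\cong K_{1,n-1}$.
\end{itemize}

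Conversely, let $G\cong K_{1,n-1}$. Lemma~\ref{r4} at $k=1$ reduces to $S^{(1)}_{A_\alpha}(G)=\rho_1(K_{1,n-1})$ exactly, because $m=\Delta$ removes the $K_2$ pieces. We then need $\sigma=1$ in Lemma~\ref{lem12}, so that $E_{A_\alpha}=2\rho_1-\frac{4\alpha m}{n}$.
\begin{itemize}
\item For $n\ge3$, the spectrum in the proof of Lemma~\ref{r4} gives $\rho_2=\alpha<\frac{2\alpha(n-1)}{n}$. So $\sigma=1$, and substituting $m=\Delta=n-1$ gives equality in \eqref{eq101}.
\item For $n=2$, check directly: $E_{A_\alpha}(K_2)=2(1-\alpha)$. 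With $m=\Delta=1$, the right-hand side is $\alpha(4-3-2+1)+\sqrt{4\alpha^2+4-8\alpha}=2(1-\alpha)$.
\end{itemize}

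The main point to verify carefully is that no $k\ge2$ can tie with the bound except in the degenerate case $n=2$.
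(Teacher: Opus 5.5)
Your proof is correct and follows essentially the same route as the paper: plug Lemma~\ref{r4} into the maximum formula of Lemma~\ref{lem12}, use $2m\ge n$ from connectivity, discard the nonnegative floor term, and read off $m=\Delta$ (hence $G\cong K_{1,n-1}$) in the equality case. The differences are minor. You bound every term of the maximum instead of only the one at $k=\sigma$, which lets you handle the degenerate case $2m=n$ (i.e.\ $K_2$) that the paper passes over when it asserts that equality forces $\sigma=1$; and in the converse your check that $\sigma=1$ is unnecessary, since $E_{A_\alpha}(G)\ge 2\rho_1-\frac{4\alpha m}{n}$ holds for every graph.
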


\begin{proof}
    Using the upper bound for $S_{A_\alpha}^{(\sigma)}(G)$ from Lemma \ref{r4} in the relation $E_{A_\alpha}(G)=2S_{A_\alpha}^{(\sigma)}(G) -\frac{4\alpha m\sigma}{n}$ (Lemma \ref{lem12}), for $\frac{1}{2} < \alpha <1$ we get
    \begin{align}\label{eq100}
        E_{A_{\alpha}}(G) &\leq \resizebox{.87\textwidth}{!}{$\alpha(4m -3\Delta +2\sigma -1) + \sqrt{\alpha^2 (\Delta +1)^2 + 4(1-2\alpha)\Delta} -2\left\lfloor \frac{1}{\sigma}\right\rfloor (2\alpha -1)(m-\Delta) -\frac{4\alpha m\sigma}{n}$}\nonumber\\
        &= \resizebox{.87\textwidth}{!}{$\alpha(4m-3\Delta-1) + \sqrt{\alpha^2 (\Delta +1)^2 + 4(1-2\alpha)\Delta} -2\left\lfloor \frac{1}{\sigma}\right\rfloor (2\alpha -1)(m-\Delta) -2\alpha \sigma (\frac{2m}{n}-1)$}.
    \end{align}
    Note that $\sigma \geq 1$. For $\frac{1}{2} < \alpha < 1$, we have $2\left\lfloor \frac{1}{\sigma}\right\rfloor (2\alpha -1)(m-\Delta) \geq 0$ as $m \geq \Delta$. Also, for a connected graph, $m \geq n-1$, and hence $2m \geq n$, which implies $2\alpha \sigma (\frac{2m}{n}-1) \geq 0$. Moreover $2\alpha \sigma (\frac{2m}{n}-1) \geq 2\alpha(\frac{2m}{n} -1)$. Substituting this along with $2\left\lfloor \frac{1}{\sigma}\right\rfloor (2\alpha -1)(m-\Delta) \geq 0$ in \eqref{eq100}, we obtain $E_{A_{\alpha}}(G) \leq \alpha(4m-3\Delta-1) + \sqrt{\alpha^2 (\Delta +1)^2 + 4(1-2\alpha)\Delta} -2\alpha\big(\frac{2m}{n} -1\big) =\alpha (4m -3\Delta - \frac{4m}{n} +1) + \sqrt{\alpha^2(\Delta +1)^2 +4(1-2\alpha)\Delta}.$
    
    Next we verify the equality condition. Consider the star graph $K_{1,n-1}$, where $m=\Delta=n-1$. Substituting these into $\eqref{eq101}$, the right hand side becomes $\alpha\big(n-4+\frac{4}{n}\big) + \sqrt{\alpha^2n^2 + 4(1-2\alpha)(n-1)}$. From \cite{A_Q-merging_by_Nikiforov}, the spectrum of $A_{\alpha}(K_{1,n-1})$ is : $\Big\{\frac{1}{2}\big(\alpha n + \linebreak \sqrt{\alpha^2n^2 + 4(1-2\alpha)(n-1)}\big),\alpha^{[n-2]},$ $\frac{1}{2}\big(\alpha n -$ $ \sqrt{\alpha^2n^2 + 4(1-2\alpha)(n-1)}\big)\Big\}$. Applying this and $\frac{2\alpha m}{n}=2\alpha -\frac{2\alpha}{n}$ into $E_{A_\alpha}(K_{1,n-1}) = \sum_{i=1}^{n} \big| \rho_i(K_{1,n-1}) - \frac{2\alpha m}{n} \big|$ lead us to obtain $E_{A_\alpha}(K_{1,n-1})$ = $\alpha\big(n-4+\frac{4}{n}\big) + \sqrt{\alpha^2n^2 + 4(1-2\alpha)(n-1)}$. Thus $K_{1,n-1}$ satisfies the equality for any $\frac{1}{2} < \alpha < 1$.

    Now consider the reverse direction. We assume that a connected graph $G^*$ satisfies the equality in \eqref{eq101}. Therefore 
    \begin{align}\label{eq102}
         E_{A_\alpha}(G^*) & =  \alpha (4m -3\Delta - \frac{4m}{n} +1) + \sqrt{\alpha^2(\Delta +1)^2 +4(1-2\alpha)\Delta}\nonumber\\
         & =\alpha(4m-3\Delta-1) + \sqrt{\alpha^2 (\Delta +1)^2 + 4(1-2\alpha)\Delta} -2\alpha \Big(\frac{2m}{n}-1\Big).
    \end{align}
    $G^*$ satisfies \eqref{eq100} also. If \eqref{eq100} and \eqref{eq102} both hold simultaneously for $G^*$, then $G^*$ must satisfy $\sigma =1$ and $2\left\lfloor \frac{1}{\sigma}\right\rfloor (2\alpha -1)(m-\Delta) = 0$ both. Since $\frac{1}{2} < \alpha < 1$, this implies $m =\Delta$ for $G^*$. If $d_1 \geq d_2 \geq \ldots \geq d_n$ is the degree sequence of $G^*$, then $2m=\sum_{i=1}^{n}d_i = \Delta + \sum_{i=2}^{n}d_i$. As $m=\Delta$, it becomes $\Delta =\sum_{i=2}^{n}d_i$. Since $G^*$ is connected, each $d_i \geq 1$ for $i=2, 3, \ldots, n$. So $\Delta \geq \sum_{i=2}^{n}1$, giving $\Delta \geq n-1$. On the other hand, in any graph, the maximum degree $\Delta \leq n-1$. Hence $\Delta = n-1$, and so $n-1 =\sum_{i=2}^{n}d_i$, which is the sum of $n-1$ positive integers, each at least $1$. This forces the degree sequence of connected graph $G^*$ to be $\{n-1, 1, 1, \ldots, 1 \}$, implying that $K_{1,n-1}$ becomes the only candidate for $G^*$.
    
    Thus, equality holds in \eqref{eq101} if and only if $G \cong K_{1, n-1}$, for any $\alpha \in (\frac{1}{2}, 1)$, which completes the proof.
\end{proof}


\section{Lower bounds for $A_{\alpha}$-energy of a graph}

This section presents two new analytic lower bounds for the $A_{\alpha}$-energy of a graph. Both bounds are obtained via spectral relations combined with classical inequality techniques. We begin with Theorem \ref{r13}, which is exact for certain extremal families and forms the basis for a detailed comparative analysis with known bounds from the literature.

\begin{theorem}\label{r13}
    Let $0 \leq \alpha < 1$. Then for a graph $G$,
    \begin{align}\label{eq103}
         E_{A_\alpha}(G) \geq \frac{2}{\Theta_{A_\alpha}(G)}\bigg[\alpha^2Z_1 +2(1-\alpha)^2m -\frac{4\alpha^2 m^2}{n}\bigg].
    \end{align}
    The equality holds for the graphs $K_n$ and $K_{\frac{n}{2}, \frac{n}{2}}$.
\end{theorem}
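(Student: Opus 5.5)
The plan is to apply Lemma \ref{lem10} to the shifted eigenvalues $x_i=\rho_i-\frac{2\alpha m}{n}$. First record the two trace identities. Since $\mathrm{tr}\,A_\alpha(G)=2\alpha m$, we have $\sum_{i=1}^n x_i=0$. Next compute $\mathrm{tr}\,A_\alpha(G)^2=\alpha^2 Z_1+2(1-\alpha)^2 m$: the cross terms $DA$ have zero trace, $\mathrm{tr}\,D^2=Z_1$ and $\mathrm{tr}\,A^2=2m$. Hence
\begin{align*}
\sum_{i=1}^n x_i^2=\alpha^2Z_1+2(1-\alpha)^2m-\frac{4\alpha^2m^2}{n}.
\end{align*}
If $E_{A_\alpha}(G)=0$, then every $x_i=0$ and both sides of \eqref{eq103} vanish. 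We interpret the right side as $0$ when $\Theta_{A_\alpha}(G)=0$, which happens only when $G$ is edgeless, since $\alpha<1$. So assume $E_{A_\alpha}(G)>0$.

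Set $y_i=x_i/E_{A_\alpha}(G)$, so that $\sum|y_i|=1$ and $\sum y_i=0$. Apply Lemma \ref{lem10} with weights $a_i=x_i$:
\begin{align*}
\Big|\sum_i x_i y_i\Big|=\frac{\sum_i x_i^2}{E_{A_\alpha}(G)}\le \frac12\big(\max_i x_i-\min_i x_i\big)=\frac12\big(\rho_1-\rho_n\big)=\frac12\Theta_{A_\alpha}(G).
\end{align*}
Rearranging gives \eqref{eq103} directly. The only mild point is checking the normalization hypotheses of Lemma \ref{lem10}, which is routine.

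For equality, compute both sides for the two stated families.

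For $K_n$: the spectrum of $A_\alpha(K_n)$ is $\{n-1,(\alpha n-1)^{[n-1]}\}$ and $\frac{2\alpha m}{n}=\alpha(n-1)$. The shifted values are $(1-\alpha)(n-1)$ once and $-(1-\alpha)$ with multiplicity $n-1$. So $E_{A_\alpha}(K_n)=2(1-\alpha)(n-1)$ and $\Theta_{A_\alpha}(K_n)=(1-\alpha)n$. Also $\sum x_i^2=(1-\alpha)^2(n-1)n$, and the right side equals $2(1-\alpha)(n-1)$.

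For $K_{\frac n2,\frac n2}$: the $A_\alpha$-eigenvalues are $\frac n2$, $\alpha\frac n2$ with multiplicity $n-2$, and $(2\alpha-1)\frac n2$. The center is $\frac{2\alpha m}{n}=\alpha\frac n2$. The shifted values are $\pm(1-\alpha)\frac n2$ and $0$, giving $E_{A_\alpha}=(1-\alpha)n$, $\Theta_{A_\alpha}=(1-\alpha)n$, and $\sum x_i^2=(1-\alpha)^2\frac{n^2}{2}$. Both sides are then $(1-\alpha)n$.

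The main obstacle is only the bookkeeping of these spectra. The spectra are known from Nikiforov \cite{A_Q-merging_by_Nikiforov}.
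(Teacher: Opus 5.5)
Your proof is correct and is essentially the paper's argument: Lemma \ref{lem10} applied to the normalized deviations $\big(\rho_i-\frac{2\alpha m}{n}\big)/E_{A_\alpha}(G)$, together with the trace identities $\sum_i\rho_i=2\alpha m$ and $\sum_i\rho_i^2=\alpha^2Z_1+2(1-\alpha)^2m$, and a direct check on $K_n$ and $K_{\frac{n}{2},\frac{n}{2}}$. The differences are minor. Taking the weights $a_i$ to be the shifted eigenvalues rather than $\rho_i$ (equivalent, since $\sum_i y_i=0$) makes the numerator visibly equal to $\sum_i x_i^2\ge 0$, so you do not need de Caen's inequality $Z_1\ge \frac{4m^2}{n}$ to remove the absolute value as the paper does; you also explicitly handle the edgeless case, where the paper's normalization would divide by zero.
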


\begin{proof}
    For $0 \leq \alpha < 1$, let us consider $a_i = \rho_i$ and $x_i = \frac{\rho_i - \frac{2\alpha m}{n}}{\sum_{i=1}^{n}|\rho_i - \frac{2\alpha m}{n}|}$, $i=1,2,\ldots,n.$ Then $\sum_{i=1}^n x_i = \frac{\sum_{i=1}^n (\rho_i - \frac{2\alpha m}{n})}{\sum_{i=1}^{n}|\rho_i - \frac{2\alpha m}{n}|}=\frac{2\alpha m -2\alpha m}{\sum_{i=1}^{n}|\rho_i - \frac{2\alpha m}{n}|}=0$ and $\sum_{i=1}^n |x_i| = \frac{\sum_{i=1}^n |\rho_i - \frac{2\alpha m}{n}|}{\sum_{i=1}^{n}|\rho_i - \frac{2\alpha m}{n}|}=1$. Now applying Lemma \ref{lem10}, we get $\Big|\sum_{i=1}^n \frac{\rho_i(\rho_i -\frac{2\alpha m}{n})}{\sum_{i=1}^{n}|\rho_i - \frac{2\alpha m}{n}|}\Big| \leq \displaystyle\frac{1}{2}\Big(\max_{1\leq i \leq n}\rho_i -\min_{1\leq i \leq n}\rho_i\Big)=\frac{1}{2}(\rho_1 -\rho_n)$, i.e. $\frac{1}{E_{A_\alpha}(G)} \big|\sum_{i=1}^n {\rho_i}^2 -\frac{2\alpha m}{n}\sum_{i=1}^n \rho_i\big| \leq \frac{1}{2}\Theta_{A_\alpha}(G)$. Applying $\sum_{i=1}^n {\rho_i}^2=\alpha^2 Z_1 + 2(1-\alpha)^2m$ and $\sum_{i=1}^n \rho_i=2\alpha m$ from \cite{A_Q-merging_by_Nikiforov}, we get $E_{A_\alpha}(G) \geq \frac{2}{\Theta_{A_\alpha}(G)}\big|\alpha^2\big(Z_1-\frac{4m^2}{n}\big) +2(1-\alpha)^2m\big|$. We know from \cite{upper_bound_on_sum_square_degree_by_caen} that $Z_1 \geq \frac{4m^2}{n}$ with equality if and only if $G$ is regular. Thus, the quantity inside the modulus sign being positive, we can remove the modulus and get the required inequality.
    
    To prove the equality, we know from \cite{A_Q-merging_by_Nikiforov} that $\text{spec}(A_\alpha(K_n))=\big\{n-1, {\alpha n-1}^{[n-1]}\big\}$ and $\text{spec}\big(A_\alpha\big(K_{\frac{n}{2}, \frac{n}{2}}\big)\big)=\big\{\frac{n}{2}, \frac{\alpha n}{2}^{[n-2]}, \alpha n-\frac{n}{2}\big\}$. Using these information, it can be easily seen that the graphs $K_n$ and $K_{\frac{n}{2}, \frac{n}{2}}$ satisfy the desired equality.
\end{proof}

In Remark \ref{r101} below, we discuss the strength and effectiveness of our result given in Theorem \ref{r13}, in comparison with the lower bounds due to Zhou et al.\cite[Theorem $1.1$ and Theorem $1.2$]{bounds_on_A_alpha_energy_by_zhou}, which are stated as follows :

\begin{lemma}\textup{\cite{bounds_on_A_alpha_energy_by_zhou}}\label{lem23}
    Let $G$ be graph and $\frac{1}{2} \leq \alpha < 1$. For $\rho_1 \geq \rho_2 \geq \ldots \geq \rho_n$ as eigenvalues of $A_{\alpha}(G)$, we write $\vartheta_i =|\rho_i - \frac{2\alpha m}{n}|$ for $i=1,2, \ldots, n$. Arranging the $\vartheta_i$'s in non decreasing order, we rename them as $\xi_1 \geq \xi_2 \geq \ldots \geq \xi_n \geq 0$. Then
    \begin{enumerate}[label={\upshape (\alph*)}]
        \item If $\xi_n > 0$, $E_{A_\alpha}(G) \geq 2\sqrt{\big[\alpha^2 Z_1 + 2(1-\alpha)^2m -\frac{4\alpha^2 m^2}{n}\big]n}\cdot \frac{\sqrt{\xi_1 \xi_n}}{\xi_1 +\xi_n}$. Equality holds if and only if $G \cong \frac{n}{2}K_2$ or $gK_{\frac{2m}{n}+1} \bigcup h(K_{\frac{2m}{n}+1, \frac{2m}{n}+1} \setminus F)$, where $g$ and $h$ are some non-negative integers, $\frac{2m}{n} \geq 2$ is an integer, and $F$ is a perfect matching of $K_{\frac{2m}{n}+1, \frac{2m}{n}+1}$.
        
        \item If $\xi_n = 0$, $E_{A_\alpha}(G) \geq \frac{\alpha^2 Z_1 + 2(1-\alpha)^2m -\frac{4\alpha^2 m^2}{n}}{\xi_1}$. Equality holds if and only if $G \cong K_{\frac{n}{2}, \frac{n}{2}}$.
    \end{enumerate}
\end{lemma}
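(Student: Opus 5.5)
This statement is Lemma \ref{lem23}, the result of Zhou et al.\ quoted from \cite{bounds_on_A_alpha_energy_by_zhou}; the paper cites it rather than proving it. If we had to prove it, we would treat $E_{A_\alpha}(G)$ as the $\ell_1$-norm of the nonnegative vector $(\xi_1,\ldots,\xi_n)$. Its $\ell_2$-norm is already known:
\begin{align*}
\sum_{i=1}^n \xi_i^2=\sum_{i=1}^n\Big(\rho_i-\frac{2\alpha m}{n}\Big)^2=\alpha^2Z_1+2(1-\alpha)^2m-\frac{4\alpha^2m^2}{n}=:M.
\end{align*}
This follows from $\sum\rho_i=2\alpha m$ and $\sum\rho_i^2=\alpha^2Z_1+2(1-\alpha)^2m$, both from \cite{A_Q-merging_by_Nikiforov}.

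For part (b), where $\xi_n=0$, we use $\xi_i^2\le \xi_1\xi_i$ for every $i$. Summing gives $M\le \xi_1\sum_i\xi_i=\xi_1E_{A_\alpha}(G)$, which is the claimed bound.

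For part (a), where $\xi_n>0$, we apply the Pólya--Szegő (equivalently Diaz--Metcalf) inequality with $a_i=\xi_i$ and $b_i=1$, all of which lie between $\xi_n$ and $\xi_1$. This gives
\begin{align*}
\sum_i\xi_i^2\cdot n\le \frac{(\xi_1+\xi_n)^2}{4\xi_1\xi_n}\Big(\sum_i\xi_i\Big)^2.
\end{align*}
Taking square roots yields $E_{A_\alpha}(G)\ge 2\sqrt{Mn}\,\sqrt{\xi_1\xi_n}/(\xi_1+\xi_n)$. An alternative route is to sum $(\xi_1-\xi_i)(\xi_i-\xi_n)\ge0$ over $i$ and finish with AM--GM.

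\textbf{Equality in part (b).} Equality requires every nonzero $\xi_i$ to equal $\xi_1$. So the spectrum of $A_\alpha(G)$ takes only the values $c$ and $c\pm t$ around the mean $c=2\alpha m/n$, and the value $c$ must actually occur.

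\textbf{Equality in part (a).} Equality requires every $\xi_i\in\{\xi_n,\xi_1\}$, with the two values taken in the balanced proportion forced by the Pólya--Szegő equality condition.

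\textbf{Main obstacle.} The inequalities are routine; the hard step is turning these spectral conditions into the listed graph families. We would first show that $G$ must be regular. The idea is that so few distinct $A_\alpha$-eigenvalues forces strong structure. Lemma \ref{lem17} bounds the diameter of each component by $2$ or $3$, and a standard walk or trace argument on $A_\alpha$ gives regularity. Regularity reduces $A_\alpha$ to an affine function of $A$, so the problem becomes classifying regular graphs with at most three adjacency eigenvalues placed symmetrically about the degree-shifted mean. That classification yields $\frac n2K_2$, unions of $K_{r+1}$, and $K_{r+1,r+1}$ minus a perfect matching in case (a), and $K_{\frac n2,\frac n2}$ in case (b). We expect this classification, and in particular excluding non-regular graphs for $\alpha\ge\frac12$, to be the bulk of the work.
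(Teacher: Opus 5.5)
The paper gives no proof of this lemma; it is quoted from Zhou et al. Your derivations of the two inequalities are correct. You have $\sum_i\xi_i^2=M$; the termwise bound $\xi_i^2\le\xi_1\xi_i$ gives (b); and P\'olya--Szeg\H{o} (or summing $(\xi_1-\xi_i)(\xi_i-\xi_n)\ge 0$ and applying AM--GM) gives (a). The spectral equality conditions you extract are also right.

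\textbf{The gap: regularity.} The equality characterization rests on your sentence that ``a standard walk or trace argument on $A_\alpha$ gives regularity,'' and no such standard argument exists. Having few distinct $A_\alpha$-eigenvalues does not force regularity: for $n\ge 3$ the star $K_{1,n-1}$ has exactly three for every $\alpha\in[0,1)$. Lemma~\ref{lem17} only bounds diameters. Any real proof must combine two facts:
\begin{enumerate}[label=(\roman*)]
\item the spectrum lies symmetrically about the mean $c=\frac{2\alpha m}{n}$;
\item $\rho_1\ge \frac{2m}{n}$, with equality iff $G$ is regular, since $A_\alpha\mathbf{1}$ is the degree vector.
\end{enumerate}
For example, in case (b) with $\alpha=\frac12$, positive semidefiniteness gives $c-t=\rho_n\ge 0$. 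Hence $\rho_1=c+t\le 2c=\frac{2m}{n}$, and $G$ is regular. This trick does not work for $\alpha>\frac12$ or in case (a), and you offer nothing in its place. So both ``only if'' directions remain unproved.

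\textbf{The target in (b) is false as stated.} The statement has no connectivity hypothesis. The regular part of your plan does go through. For $r$-regular $G$ one has $\xi_i=(1-\alpha)|\gamma_i|$ and $M=(1-\alpha)^2nr$. In (a), the equality conditions together with $\sum_i\gamma_i^2=nr$ force $|\gamma_i|\in\{r,1\}$. The count $\mathrm{tr}\,A(H)^2=n_Hr$ on each component $H$ then yields exactly $K_2$, $K_{r+1}$ and $K_{r+1,r+1}\setminus F$, which also verifies the ``if'' direction. In (b), however, the same count (with every nonzero $|\gamma_i|$ equal to $r$) makes each component a $K_{r,r}$. So every $gK_{r,r}$ with $r\ge 2$ attains equality, not only $K_{\frac n2,\frac n2}$.

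Concretely, take $G=2C_4$:
\begin{enumerate}[label=(\roman*)]
\item its $A_\alpha$-spectrum is $\{2^{[2]},(2\alpha)^{[4]},(4\alpha-2)^{[2]}\}$ and $c=2\alpha$;
\item so the $\xi_i$ are $(2-2\alpha)^{[4]},0^{[4]}$, giving $M=16(1-\alpha)^2$;
\item then $E_{A_\alpha}(G)=8(1-\alpha)=M/\xi_1$, although $G\not\cong K_{4,4}$.
\end{enumerate}
The ``only if'' in (b) therefore needs $G$ connected, or the conclusion must be weakened to $G\cong gK_{r,r}$. As written, your plan aims at a false conclusion.
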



\begin{remark}\label{r101}
\rm{
    Compared to Lemma \ref{lem23}, Theorem $\ref{r13}$ has several structural advantages: it is valid for the entire range $0 \leq \alpha <1$, has a single unified formula with no branch conditions, and in terms of spectral quantity, it depends only on the $A_{\alpha}$-spectral spread $\Theta_{A_\alpha}(G)$ (requiring just $\rho_1$ and $\rho_n$). In contrast, Lemma $\ref{lem23}$ applies only for $\frac{1}{2} \leq \alpha <1$, has separate cases for $\xi_n \geq 0$ and $\xi_n =0$, and requires the computation and ordering of all deviation terms to identify $\xi_1$ and $\xi_n$. These features make our result easier to apply and more broadly applicable. Moreover, we conducted a numerical investigation over diverse graph families, a glimpse of which is shown in Table \ref{tab:lower bound-comparison}. It has been observed that in most of the cases the bound in Theorem \ref{r13} outperforms that in Lemma \ref{lem23}, especially for large and irregular graphs.
    
    \begin{table}[!h]
    \centering
        \footnotesize
        \begin{tabular}{|c|c|c|c|c|}
            \hline \textbf{Graph} & $\boldsymbol{\alpha}$ & \textbf{Theorem $\ref{r13}$ Bound} & \textbf{Lemma $\ref{lem23}$ Bound} & \textbf{Better Bound} \\
            \hline 
            $S_4$  & $0.50$ & $2.25$ & $2.24$ & Theorem $\ref{r13}$ \\
            $F_3$  & $0.60$ & $4.89$ & $2.20$ & Theorem $\ref{r13}$ \\
            $C_9$  & $0.70$ & $2.78$ & $2.71$ & Theorem $\ref{r13}$ \\
            $B_4$  & $0.80$ & $7.00$ & $6.43$ & Theorem $\ref{r13}$ \\
            $P_{10}$ & $0.90$ & $2.68$ & $0.76$ & Theorem $\ref{r13}$ \\
            \hline
        \end{tabular}
        \caption{Numerical comparison of our result (Theorem $\ref{r13}$) and Lemma $\ref{lem23}$ for different graphs and $\alpha$ values.}\label{tab:lower bound-comparison}
    \end{table}

    In addition, several other lower bounds from the literature such as Theorem $3.3$ from \textup{\cite{A_alpha_energy_and_zagreb_by_pirzada}} were also tested numerically against Theorem $\ref{r13}$. Not only did they fail to outperform our bound in most of the cases, but in many instances they even underperformed relative to Lemma $\ref{lem23}$ itself. This reinforces the strength, versatility, and competitiveness of Theorem $\ref{r13}$ as a reliable lower bound for the $A_{\alpha}$-energy.
    }
\end{remark}

The next result directly follows from Theorem $\ref{r13}$ upon substituting the $A_{\alpha}$-spectral spread $\Theta_{A_\alpha}(G)$ by its upper bound in terms of basic graph invariants given in Lemma $\ref{lem22}$. While this substitution renders the bound more computationally convenient, it comes at the cost of some sharpness compared to the original form in Theorem $\ref{r13}$.

\begin{corollary}\label{r16}
    Let $0 \leq \alpha < 1$. Then for a graph $G$,
    \begin{align*}
         E_{A_\alpha}(G) \geq \sqrt{2\alpha^2 Z_1 + 4(1-\alpha)^2m -\frac{8\alpha^2m^2}{n}}.
    \end{align*}
    Equality holds for $G \cong K_{\frac{n}{2},\frac{n}{2}}$.
\end{corollary}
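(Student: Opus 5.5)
Set $T = \alpha^2 Z_1 + 2(1-\alpha)^2 m - \frac{4\alpha^2 m^2}{n}$, so the right-hand side of Corollary \ref{r16} is $\sqrt{2T}$. The plan is to combine Theorem \ref{r13} with Lemma \ref{lem22} as the text suggests.

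Theorem \ref{r13} gives $E_{A_\alpha}(G) \ge 2T/\Theta_{A_\alpha}(G)$. Lemma \ref{lem22} states exactly $\Theta_{A_\alpha}(G) \le \sqrt{2T}$, since $2\alpha^2Z_1+4(1-\alpha)^2m-\frac{8\alpha^2m^2}{n}=2T$. First I will check that $T>0$. This was already shown in the proof of Theorem \ref{r13}: $Z_1 \ge 4m^2/n$, and $2(1-\alpha)^2m>0$ when $m\ge 1$ and $\alpha<1$. The empty graph ($m=0$) is a degenerate case where both sides are $0$, and I will set it aside. With $T>0$ we also get $\Theta_{A_\alpha}(G)>0$, because $\sum_i(\rho_i-\frac{2\alpha m}{n})^2=T>0$ means the eigenvalues are not all equal. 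Hence dividing is legitimate, and
\[
E_{A_\alpha}(G)\ \ge\ \frac{2T}{\Theta_{A_\alpha}(G)}\ \ge\ \frac{2T}{\sqrt{2T}}=\sqrt{2T}.
\]

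For equality, take $G=K_{\frac n2,\frac n2}$. Here $m=n^2/4$ and $Z_1=n\cdot n^2/4$, and its spectrum is $\{\frac n2,\frac{\alpha n}{2}^{[n-2]},\alpha n-\frac n2\}$. Since $\frac{2\alpha m}{n}=\frac{\alpha n}{2}$, the only nonzero deviations are $\pm\frac{(1-\alpha)n}{2}$, so $E_{A_\alpha}=(1-\alpha)n$. Meanwhile $2T=4(1-\alpha)^2\cdot\frac{n^2}{4}=(1-\alpha)^2n^2$, because the $Z_1$ and $m^2$ terms cancel for a regular graph. So both sides equal $(1-\alpha)n$.

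The only mildly delicate step is making sure the division by $\Theta_{A_\alpha}(G)$ and the direction of the inequality are justified, which is what the positivity of $T$ and of $\Theta_{A_\alpha}(G)$ handles. The rest is direct substitution.
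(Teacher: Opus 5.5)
Your proof is correct and follows the paper's own route: the paper obtains Corollary \ref{r16} by inserting the spread bound of Lemma \ref{lem22}, $\Theta_{A_\alpha}(G)\le\sqrt{2T}$, into Theorem \ref{r13}, and it confirms equality by the same substitution for $K_{\frac n2,\frac n2}$. The paper leaves implicit that $T>0$ and $\Theta_{A_\alpha}(G)>0$ and that the edgeless graph must be treated separately, and your proposal checks these explicitly.
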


Next we establish an alternative bound which retains sharpness for several extremal graph classes.

\begin{theorem}\label{r17}
    Let $G$ be a connected graph. Then
    \begin{enumerate}[label={\upshape (\alph*)}]
        \item If $0 \leq \alpha \leq \frac{1}{2}$, then $E_{A_\alpha}(G) \geq 2\big[ \frac{(1-\alpha)Z_1}{m} - \frac{2\alpha m}{n} + (2\alpha -1)\Delta \big]$. Equality holds for the graphs $K_n$ and $K_{\frac{n}{2}, \frac{n}{2}}$ for all $0 \leq \alpha \leq \frac{1}{2}$, and $K_{1,n-1}$ for $\alpha =\frac{1}{2}$.

        \item If $\frac{1}{2} < \alpha < 1$, then $E_{A_\alpha}(G) \geq \frac{2\alpha Z_1}{m} + \frac{4(1-3\alpha) m}{n}$. Equality holds for the graphs $K_n$ and $K_{\frac{n}{2}, \frac{n}{2}}$ for all $\frac{1}{2} < \alpha < 1$.
    \end{enumerate}
\end{theorem}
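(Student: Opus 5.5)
The plan is to combine a one-line consequence of the trace identity with a lower bound on $\rho_1$. Since $\sum_{i=1}^n(\rho_i-\frac{2\alpha m}{n})=0$, the positive and negative parts of the deviations have equal total size, so $E_{A_\alpha}(G)=2\sum_{\rho_i\geq 2\alpha m/n}(\rho_i-\frac{2\alpha m}{n})$. Taking $k=1$ in Lemma \ref{lem12} gives the same conclusion, namely
\begin{align*}
E_{A_\alpha}(G)\geq 2\Big(\rho_1-\frac{2\alpha m}{n}\Big).
\end{align*}
Both parts of the theorem then reduce to proving a suitable lower bound for $\rho_1$. For that I would split $A_\alpha(G)$ into a multiple of $L_S(G)$ plus a second matrix, and use the case $i=j=1$ of the second inequality in Lemma \ref{lem14}, i.e. $\lambda_1(P+Q)\geq\lambda_1(P)+\lambda_n(Q)$.

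The spectral input is the classical bound $q_1\geq Z_1/m$. It follows from $L_S(G)=R^TR$, where $R$ is the edge--vertex incidence matrix, and $RR^T=2I_m+A(\mathcal{L}(G))$. Hence $q_1=2+\gamma_1(\mathcal{L}(G))$. The largest adjacency eigenvalue is at least the average degree, and the average degree of $\mathcal{L}(G)$ is $\frac{1}{m}\sum_{uv\in E}(d_u+d_v-2)=\frac{Z_1}{m}-2$. Therefore $q_1\geq Z_1/m$.

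\emph{Part (a), $0\leq\alpha\leq\frac12$.} Write $A_\alpha=(1-\alpha)L_S+(2\alpha-1)D$. The coefficient $2\alpha-1$ is $\leq 0$, so $\lambda_n((2\alpha-1)D)=(2\alpha-1)\Delta$. Lemma \ref{lem14} then gives $\rho_1\geq(1-\alpha)q_1+(2\alpha-1)\Delta\geq\frac{(1-\alpha)Z_1}{m}+(2\alpha-1)\Delta$. Substituting into the displayed inequality yields exactly the bound in (a).

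For the equality cases I would check the three graphs directly from their known spectra. For $K_n$ and $K_{\frac n2,\frac n2}$ the graph is regular, so $q_1=Z_1/m$ and $\Delta=\delta$ makes the Weyl step tight, and only one eigenvalue exceeds $\frac{2\alpha m}{n}$. For $K_{1,n-1}$ at $\alpha=\frac12$ the $D$-term vanishes, and $q_1(K_{1,n-1})=n=Z_1/m$.

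\emph{Part (b), $\frac12<\alpha<1$.} The target is $\rho_1\geq\frac{\alpha Z_1}{m}+\frac{2(1-2\alpha)m}{n}$; this expression equals $n-1$ for $K_n$, which is consistent with the claimed equality. Here I would write $A_\alpha=\alpha L_S+(1-2\alpha)A$ with $1-2\alpha<0$, so that Lemma \ref{lem14} gives $\rho_1\geq\alpha q_1-(2\alpha-1)\gamma_1$.

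This is the step I expect to be the main obstacle. To reach the target one would need $\gamma_1\leq 2m/n$, but in fact $\gamma_1\geq 2m/n$, so the Weyl step points the wrong way. My first alternative is to estimate $\rho_1$ directly by a Rayleigh quotient with a test vector that interpolates between the degree vector and the all-ones vector $\mathbf 1$ (which gives $\rho_1\geq 2m/n$). Another option is to use the convexity of $\rho_1$ in $\alpha$, or a combination of $q_1\geq Z_1/m$ and $\rho_1\geq 2m/n$. I would then verify equality for $K_n$ and $K_{\frac n2,\frac n2}$ from their spectra, as in part (a).
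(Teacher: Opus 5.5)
Your part (a) is correct and follows the paper's proof. The paper quotes $\rho_1\ge(1-\alpha)q_1+(2\alpha-1)\Delta$ from the literature, which is exactly your Weyl step with $A_\alpha=(1-\alpha)L_S+(2\alpha-1)D$. It then uses $q_1=2+\gamma_1(\mathcal{L}(G))\ge Z_1/m$ and finishes with $E_{A_\alpha}(G)\ge 2(\rho_1-\frac{2\alpha m}{n})$, i.e. Lemma \ref{lem12} with $k=1$.

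\textbf{Part (b) is not proved.} You stop at the obstacle you flag, and that obstacle is real. It is also exactly where the paper's own argument breaks. The paper combines $\rho_1\ge\alpha q_1+(1-2\alpha)\gamma_1$ with $\gamma_1\ge\frac{2m}{n}$, and this is invalid because $1-2\alpha<0$.

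\textbf{Your fallbacks cannot close the gap.} A cleverer Rayleigh quotient, or convexity in $\alpha$, both aim at the intermediate inequality $\rho_1\ge\frac{\alpha Z_1}{m}+\frac{2(1-2\alpha)m}{n}$, and that inequality is false. Take $G=K_4\circ K_1$, i.e. $K_4$ with one pendant vertex at each vertex, so $n=8$, $m=10$, $Z_1=68$, $\Delta=4$. Take $\alpha=\frac34$. The Perron vector is constant on the clique and on the pendants, so $\rho_1$ is the largest eigenvalue of the quotient matrix $\begin{pmatrix}15/4&1/4\\ 1/4&3/4\end{pmatrix}$. This gives $\rho_1=\frac{9+\sqrt{37}}{4}\approx 3.771$, while the target is $\frac34\cdot\frac{68}{10}-\frac12\cdot\frac{20}{8}=3.85$. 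Hence no argument that passes only through $E_{A_\alpha}(G)\ge 2(\rho_1-\frac{2\alpha m}{n})$ can yield (b).

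For this graph, (b) itself still holds: $E_{A_\alpha}(G)\approx 9.23$ against a right-hand side of $3.95$. The reason is that three further eigenvalues, each $\approx 2.781$, lie above the mean $\frac{15}{8}$. A proof of (b) as stated would therefore have to control eigenvalues beyond $\rho_1$. Neither your sketch nor the paper does this, so the validity of (b) is left unsettled by both.

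What your Weyl step does legitimately give, using $\gamma_1\le\Delta$, is the weaker bound
\[
E_{A_\alpha}(G)\ge 2\Big[\frac{\alpha Z_1}{m}+(1-2\alpha)\Delta-\frac{2\alpha m}{n}\Big].
\]
This coincides with (b) for regular graphs, so it keeps the equality cases $K_n$ and $K_{n/2,n/2}$.
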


\begin{proof}
    \begin{enumerate}[label=\upshape (\alph*), listparindent=\parindent, parsep=0pt]
        \item From \cite{bounds_for_A_alpha_spec_rad_by_alhevaz}, we have $\rho_1(G) \geq (1-\alpha)q_1(G) + (2\alpha -1)\Delta$ for $0\leq \alpha \leq \frac{1}{2}$. It is known \cite{energy_of_line_graphs_by_gutman_robbiano} that $q_1(G) = 2 + \gamma_1(\mathcal{L}(G))$, where $\mathcal{L}(G)$ is the line graph of $G$. From \cite{A_Q-merging_by_Nikiforov}, we know that for any graph $G$, $\gamma_1(G)\geq\frac{2m}{n}$. Therefore for the graph $\mathcal{L}(G)$, we get $\gamma_1(\mathcal{L}(G)) \geq \frac{ 2\times|E(\mathcal{L}(G))|}{ |V(\mathcal{L}(G))|}=\frac{Z_1-2m}{m}$. Therefore $q_1(G)\geq 2+\frac{Z_1-2m}{m}$ and using this, from above we get $\rho_1(G) \geq (1-\alpha)\big(2+\frac{Z_1-2m}{m}\big) + (2\alpha -1)\Delta = \frac{(1-\alpha)Z_1}{m} + (2\alpha -1)\Delta$. From Lemma \ref{lem12}, $E_{A_\alpha}(G)=$ \resizebox{.22\textwidth}{!}{$\displaystyle\max_{1\leq k \leq n}\bigg\{2S_{A_\alpha}^{(k)}(G)-\frac{4\alpha mk}{n}\bigg\}$} $\geq 2S_{A_\alpha}^{(1)}(G)-\frac{4\alpha m}{n}= 2\big[\rho_1(G) - \frac{2\alpha m}{n}\big] \geq 2\big[ \frac{(1-\alpha)Z_1}{m} + (2\alpha -1)\Delta - \frac{2\alpha m}{n} \big]$, the last inequality comes after using the lower bound for $\rho_1(G)$ we obtained.

        To talk about the equality cases, using $\text{spec}(A_\alpha(K_n))=\big\{n-1, {\alpha n-1}^{[n-1]}\big\}$ and $\frac{2\alpha m}{n} =\alpha (n-1)$, we observe that $E_{A_{\alpha}}(K_n) = |n-1-\alpha (n-1)| + (n-1) |\alpha n -1 -\alpha (n-1)| = 2(1-\alpha)(n-1)$, while using $Z_1 = n(n-1)^2$ and $\Delta = n-1$ for $K_n$, the right hand side becomes $2\Big[ \frac{(1-\alpha) n(n-1)^2}{\frac{n(n-1)}{2}} - \frac{2\alpha n(n-1)}{2n} + (2\alpha -1)(n-1) \Big]$, simplifying which gives $2(1-\alpha)(n-1)$, same as $E_{A_{\alpha}}(K_n)$. Thus $K_n$ satisfies the equality for $0 \leq \alpha \leq \frac{1}{2}$. Similarly for the graph $K_{\frac{n}{2}, \frac{n}{2}}$, using $\text{spec}(A_\alpha(K_{\frac{n}{2}, \frac{n}{2}}))= \big\{ \frac{n}{2}, \frac{\alpha n}{2}^{[n-2]}, \alpha n -\frac{n}{2} \big\}$ and $\frac{2\alpha m}{n} = \frac{\alpha n}{2}$, we get $E_{A_{\alpha}}(K_{\frac{n}{2}, \frac{n}{2}}) = |\frac{n}{2}-\frac{\alpha n}{2}| + (n-2) |\frac{\alpha n}{2} - \frac{\alpha n}{2}| + |\alpha n -\frac{n}{2} - \frac{\alpha n}{2}| = (1-\alpha)n$. Also, $Z_1 =\frac{n^3}{4}$ and $\Delta = \frac{n}{2}$ make the right hand side $2\big[ \frac{(1-\alpha)\frac{n^3}{4}}{\frac{n^2}{4}} - \frac{2\alpha \frac{n^2}{4}}{n} + (2\alpha -1)\frac{n}{2} \big]$, after simplification which turns out to become $(1-\alpha)n$. Hence $K_{\frac{n}{2}, \frac{n}{2}}$ too satisfies the equality for $0 \leq \alpha \leq \frac{1}{2}$. For the star $K_{1,n-1}$, we use $\text{spec}(A_\alpha(K_{1, n-1}))= \big\{ \frac{1}{2}\big( \alpha n + \sqrt{\alpha^2 n^2 + 4(1-2\alpha)(n-1)} \big), \alpha^{[n-2]}, \frac{1}{2}\big( \alpha n - \sqrt{\alpha^2 n^2 + 4(1-2\alpha)(n-1)} \big) \big\}$, $\frac{2\alpha m}{n} = \frac{2\alpha(n-1)}{n}$, $Z_1 = (n-1)^2 + (n-1)$ and $\Delta = n-1$ particularly for $\alpha=\frac{1}{2}$, and similar to as we did for $K_n$ and $K_{\frac{n}{2}, \frac{n}{2}}$, eventually we obtain that $E_{A_{\frac{1}{2}}}(K_{1,n-1})$ = right hand side quantity of the inequality = $n-2+\frac{n}{2}$. Thus the equality holds for $K_{1, n-1}$ too, when $\alpha = \frac{1}{2}$ .\label{i1}
        
        \item  Let $\frac{1}{2} < \alpha < 1$. Then from \cite{bounds_for_A_alpha_spec_rad_by_alhevaz}, $\rho_1(G) \geq \alpha q_1(G) + (1-2\alpha)\gamma_1(G)$. As seen in the proof of part \ref{i1}, $q_1(G) \geq 2+\frac{Z_1 - 2m}{m}$ and $\gamma_1(G) \geq \frac{2m}{n}$. Using these, the first inequality produces $\rho_1(G) \geq \alpha \big(2+\frac{Z_1 - 2m}{m}\big) + (1-2\alpha)\times \frac{2m}{n}=\frac{\alpha Z_1}{m} + \frac{2(1-2\alpha )m}{n}$. By plugging this into $E_{A_\alpha}(G) \geq 2\big[\rho_1(G) -\frac{2\alpha m}{n} \big]$, we are able to obtain the desired inequality.

        The equality cases follow in the manner similar to that of part \ref{i1}.
        \label{i2} \qedhere
    \end{enumerate}
\end{proof}


\section{Relations between $A_\alpha$-energy and other graph energies associated with a graph}

This section demonstrates how $E_{A_\alpha}(G)$ is related to other graph energies, like $\mathfrak{E}(G)$, $E_L(G)$, as well as $\mathfrak{E}(\mathcal{L}(G))$. We begin with the following relation.

\begin{theorem}\label{r20}
    If $G$ is a connected graph with $n \geq 2$ and $\zeta$ is the adjacency rank of $G$, then for $\frac{1}{2} \leq \alpha < 1$,
    \begin{align}\label{eq18}
        E_{A_\alpha}(G) + \alpha E_L(G) \geq 2\mathfrak{E}(G) -\frac{4\alpha m\zeta}{n}.
    \end{align}
    Equality holds if and only if $\alpha =\frac{1}{2}$ and $G \cong K_{\frac{n}{2}, \frac{n}{2}}$.
\end{theorem}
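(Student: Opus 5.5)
The plan is to rest everything on the exact matrix identity
\[
A(G)=A_\alpha(G)-\alpha L(G),
\]
which holds because $\alpha D+(1-\alpha)A-\alpha(D-A)=A$. Put $c=\frac{2m}{n}$ and insert the shifts that define $E_{A_\alpha}$ and $E_L$:
\[
A=\bigl(A_\alpha-\alpha cI_n\bigr)+\alpha\bigl(cI_n-L\bigr).
\]
The two bracketed matrices have energies $E_{A_\alpha}(G)$ and $\alpha E_L(G)$ by Lemma \ref{lem21}, and both have trace zero. The adjacency rank $\zeta$ enters through the $\zeta$ nonzero eigenvalues of $A$. I would let $P$ be the orthogonal projection onto the range of $A$, so that $\mathfrak{E}(G)=\mathcal{E}(PAP)$ and $\operatorname{tr}P=\zeta$.

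The first step is to compress the identity by $P$. Compression does not increase matrix energy, and Lemma \ref{lem9} handles the sum, so
\[
\mathfrak{E}(G)\le \mathcal{E}\bigl(P(A_\alpha-\alpha cI)P\bigr)+\alpha\,\mathcal{E}\bigl(P(cI-L)P\bigr)\le E_{A_\alpha}(G)+\alpha E_L(G).
\]
A second route uses Lemma \ref{lem1} with $k$ equal to the number of positive adjacency eigenvalues. Combined with Lemma \ref{lem12}, and with the fact that the top partial sums of a trace-zero symmetric matrix are at most half its energy, it gives $\tfrac12\mathfrak{E}\le \tfrac12E_{A_\alpha}+\tfrac{\alpha}{2}E_L$. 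Both routes yield $\mathfrak{E}(G)\le E_{A_\alpha}(G)+\alpha E_L(G)$.

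The second step is to turn this into the stated form, which has a $2\mathfrak{E}$ on the right. I would try to spend the positive semidefiniteness of $A_\alpha$ for $\alpha\ge\frac12$, which is the only place the hypothesis on $\alpha$ can enter. The idea is to write $2PAP$ using the identity twice with different shifts. One copy would be $P(A_\alpha-\alpha cI)P$ together with $\alpha P(cI-L)P$. The other would involve $PA_\alpha P$ itself, whose energy is its trace because $A_\alpha\succeq 0$. That copy loses exactly a multiple of $c\operatorname{tr}P=c\zeta$, and the constant $\frac{4\alpha m\zeta}{n}=2\alpha c\zeta$ is the natural candidate for that loss.

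This second step is the main obstacle. I do not yet see how to make the bookkeeping give exactly $2\mathfrak{E}-2\alpha c\zeta$. The naive version $A=(A_\alpha-cI)+\alpha(cI-L)+(1-\alpha)cI$, compressed by $P$, only gives
\[
E_{A_\alpha}+\alpha E_L\ge \mathfrak{E}-2(1-\alpha)c\zeta,
\]
which is too weak. The test case $\alpha=\frac12$, $G\cong K_{\frac n2,\frac n2}$ is a useful check: there $\zeta=2$ and both sides of \eqref{eq18} equal $n$. Whatever decomposition is chosen must be tight in that case.

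For the equality characterization, I would trace the inequalities used. Equality in Lemma \ref{lem9} forces a common orthogonal $P'$ making both summands positive semidefinite, which means their eigenvectors must align. Equality in the positive-semidefiniteness step would force $\alpha=\frac12$. Lemma \ref{lem17} then bounds the number of distinct eigenvalues, so the diameter of $G$ is at most $2$ with very few distinct eigenvalues. Together with $\zeta=2$, this should single out the complete bipartite graphs. The balanced case $K_{\frac n2,\frac n2}$ would then be confirmed from its $A_\alpha$- and Laplacian spectra.
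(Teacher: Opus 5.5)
Your argument only establishes $\mathfrak{E}(G)\le E_{A_\alpha}(G)+\alpha E_L(G)$, and you acknowledge that the passage to $2\mathfrak{E}(G)-2\alpha c\zeta$ is missing. (That inequality in fact follows from Lemma \ref{lem9} applied directly to $A=(A_\alpha-\alpha cI_n)+\alpha(cI_n-L)$, with no compression and for every $\alpha$.) The missing step is the entire content of the theorem, because the stated bound does not follow from yours. For $K_{1,4}$ and $\alpha=\frac12$ one has $\mathfrak{E}=4$, while $2\mathfrak{E}-\frac{4\alpha m\zeta}{n}=\frac{24}{5}$.

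The obstruction is structural. The triangle inequality on one decomposition of $A$ makes $E_{A_\alpha}$ and $\alpha E_L$ share a single copy of $\mathfrak{E}(G)$. The theorem needs each of them to dominate a full copy on its own, paid for by a shift term. So you must compare each energy with a different half of the adjacency spectrum, using a different positivity fact each time.

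Let $\zeta^{+},\zeta^{-}$ be the numbers of positive and negative adjacency eigenvalues.
\begin{itemize}
\item Since $A_\alpha-A=\alpha L\succeq0$, Weyl monotonicity gives $\rho_i\ge\gamma_i$ for all $i$.
\item Since $A_\alpha\succeq0$ for $\alpha\ge\frac12$, also $\alpha L\succeq-A$, so $\alpha\mu_i\ge-\gamma_{n-i+1}$. This is where the hypothesis on $\alpha$ is used. The paper obtains both facts via Lemma \ref{lem14} with $\mu_n=0$ and $\rho_n\ge0$.
\end{itemize}
Now take $k=\zeta^{+}$ in Lemma \ref{lem12}, and $k=\zeta^{-}$ in its Laplacian analogue $E_L=\max_k\{2\sum_{i\le k}\mu_i-\frac{4mk}{n}\}$. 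This gives
\[
E_{A_\alpha}\ge 2\sum_{i=1}^{\zeta^{+}}\gamma_i-2\alpha c\zeta^{+}=\mathfrak{E}-2\alpha c\zeta^{+},\qquad
\alpha E_L\ge -2\sum_{i=1}^{\zeta^{-}}\gamma_{n-i+1}-2\alpha c\zeta^{-}=\mathfrak{E}-2\alpha c\zeta^{-},
\]
and adding the two yields \eqref{eq18}.

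In your projection language, do not compress by $P=P_{+}+P_{-}$. Instead, use $P_{+}$ alone against $A_\alpha-\alpha cI$, via $\operatorname{tr}P_{+}L\ge0$. Use $P_{-}$ alone against $\alpha(L-cI)$, via $\operatorname{tr}P_{-}A_\alpha\ge0$. Then bound each trace by half the corresponding energy using Ky Fan's maximum principle.

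Your equality sketch inherits the same problem. It is phrased through equality in Lemma \ref{lem9}, which plays no role once the inequality is proved correctly. The claims that $\alpha=\frac12$ is forced and that $\zeta=2$ are asserted rather than derived. With the chain above, the argument goes as follows.
\begin{itemize}
\item Equality forces $\rho_i=\gamma_i$ for $i\le\zeta^{+}$ and $\alpha\mu_i=-\gamma_{n-i+1}$ for $i\le\zeta^{-}$.
\item From $\rho_1=\gamma_1$, the connected graph $G$ is regular of degree $\gamma$. Hence $\mu_1=\gamma-\gamma_n$, and combined with $\alpha\mu_1=-\gamma_n$ this gives $\gamma_n=-\frac{\alpha}{1-\alpha}\gamma$.
\item Perron--Frobenius then forces $\alpha=\frac12$ and $\gamma_n=-\gamma$, so $G$ is regular bipartite.
\item If $G\not\cong K_{\frac n2,\frac n2}$, its diameter is at least $3$. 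Lemma \ref{lem17} then gives at least four distinct adjacency eigenvalues, hence $\zeta^{\pm}\ge2$.
\item The case $i=2$ yields $\mu_2=2\gamma_2$ together with $\mu_2=\gamma+\gamma_2$, i.e.\ $\gamma_2=\gamma$. This contradicts connectivity.
\end{itemize}
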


\begin{proof}
    Let $\zeta^+$ and $\zeta^-$ be the numbers of the positive and the negative eigenvalues (including multiplicities) of $A(G)$, respectively. Therefore $1\leq \zeta^+\leq n-1$, $1\leq \zeta^-\leq n-1$ and $\zeta=\zeta^+ + \zeta^-$. From Lemma \ref{lem12},
    \begin{align}\label{eq14}
        \resizebox{.55\textwidth}{!}{$E_{A_\alpha}(G) =\displaystyle\max_{1\leq k \leq n}\bigg\{2S_{A_\alpha}^{(k)}(G)-\frac{4\alpha mk}{n}\bigg\} \geq 2 \sum_{i=1}^{\zeta^+}\rho_i-\frac{4\alpha m\zeta^+}{n}$}
    \end{align}
    Analogous to Lemma \ref{lem12}, for $E_L(G)$, it is a well known result from \cite{laplacian_energy_of_graphs_by_kcdas} that $E_L(G) =$ \resizebox{.22\textwidth}{!}{$\displaystyle\max_{1\leq k \leq n}\bigg\{2\sum_{i=1}^{k}\mu_i-\frac{4mk}{n}\bigg\}$}.
    Putting $k=\zeta^-$,
    \begin{align}\label{eq15}
        \resizebox{.3\textwidth}{!}{$E_L(G) \geq 2\sum_{i=1}^{\zeta^-}\mu_i-\frac{4m\zeta^-}{n}$}.
    \end{align}
    Now $A_\alpha(G) =\alpha D(G) + (1-\alpha)A(G)=\alpha\big(D(G)-A(G)\big) + A(G)=\alpha L(G) + A(G)$.
    Applying Lemma \ref{lem14}, we get $\rho_n \leq \alpha\mu_i + \gamma_{n-i+1}$, $1 \leq i \leq n$ and $\rho_i \geq \alpha\mu_n + \gamma_{i}$, $1 \leq i \leq n$. We know that $\mu_n=0$ and since for $\frac{1}{2} \leq \alpha <1$, $A_\alpha(G)$ is a positive semidefinite matrix, $\rho_n \geq 0$. Therefore from the above inequalities, for $\frac{1}{2} \leq \alpha <1$, we get    
    \begin{align}
        \alpha\mu_i &\geq -\gamma_{n-i+1}, \quad 1 \leq i \leq n \label{eq16} \\
        \text{and } \quad\quad \rho_i &\geq \gamma_{i}, \quad 1 \leq i \leq n \label{eq17}.
    \end{align}
    Using \eqref{eq17} in \eqref{eq14}, we get
    $E_{A_\alpha}(G) \geq 2 \sum_{i=1}^{\zeta^+}\gamma_i-\frac{4\alpha m\zeta^+}{n}$, $\frac{1}{2} \leq \alpha <1$
    and using \eqref{eq16} in \eqref{eq15}, we get $\alpha E_L(G) \geq -2\sum_{i=1}^{\zeta^-}\gamma_{n-i+1}-\frac{4\alpha m\zeta^-}{n}$, $\frac{1}{2} \leq \alpha <1$.
    Adding these two inequalities, for $\frac{1}{2} \leq \alpha <1$. we get $E_{A_\alpha}(G) + \alpha E_L(G) \geq 2\Big[\sum_{i=1}^{\zeta^+}\gamma_i -\sum_{j=1}^{\zeta^-}\gamma_{n-j+1}\Big]-\frac{4\alpha m}{n}(\zeta^+ + \zeta^-)=2\Big[\sum_{i=1}^{\zeta^+}|\gamma_i| +\sum_{j=1}^{\zeta^-}|\gamma_{n-j+1}|\Big]-\frac{4\alpha m\zeta}{n}=2\mathfrak{E}(G)-\frac{4\alpha m\zeta}{n}.$
    Thus we get the required inequality.\bigskip
    
    Now we discuss the equality of \eqref{eq18}. Putting $\alpha = \frac{1}{2}$ in \eqref{eq18}, for a bipartite graph $G$, we get $E_{A_{\frac{1}{2}}}(G) + \frac{1}{2}E_{L_S}(G) \geq 2\mathfrak{E}(G)-\frac{2m\zeta}{n}$,
    as the signless Laplacian spectrum and the Laplacian spectrum of a bipartite graph are identical. Therefore $4E_{A_{\frac{1}{2}}}(G) \geq 4\mathfrak{E}(G)-\frac{4m\zeta}{n}$. Now $K_{\frac{n}{2}, \frac{n}{2}}$ is a bipartite graph. Using $\text{spec}\big(A_\alpha\big(K_{\frac{n}{2}, \frac{n}{2}}\big)\big)=\big\{\frac{n}{2}, \frac{\alpha n}{2}^{[n-2]}, \alpha n-\frac{n}{2}\big\}$ for $\alpha=0, \frac{1}{2}$, we can easily verify that the equality is true for $G \cong K_{\frac{n}{2}, \frac{n}{2}}$. Conversely let the equality be hold for a connected graph $G$ and for some $\alpha$ ($\frac{1}{2} \leq \alpha < 1$). Then the inequalities in \eqref{eq16} and \eqref{eq17} become equality: 
    \begin{align}
        \alpha\mu_i &= -\gamma_{n-i+1}\quad \text{for}\quad\quad 1\leq i \leq \zeta^- \label{eq19}\\ \text{and } \quad \quad \rho_i &= \gamma_{i} \quad  \text{for} \quad 1 \leq i \leq \zeta^+. \label{eq20}
    \end{align}
    From \eqref{eq20}, we get $\rho_1=\gamma_1 = \gamma$ (say). We get from \cite{A_Q-merging_by_Nikiforov} that if $G$ is a connected graph with $\rho_1=\gamma_1$, then $G$ is $\gamma_1$-regular. Therefore $G$ is a connected $\gamma$-regular graph. Since $L(G) =\gamma I_n - A(G)$, we have $\mu_i = \gamma - \gamma_{n-i+1}$, $1\leq i \leq n$. In particular $\mu_1 = \gamma - \gamma_{n}$. Also from \eqref{eq19}, $\alpha\mu_1 = -\gamma_{n}$. Combining these two relation, we get $\alpha(\gamma -\gamma_n) = -\gamma_n$, i.e. $\gamma_n = -\frac{\alpha}{1-\alpha}\gamma$. This gives $|\gamma_n| \geq \gamma $, because $\frac{\alpha}{1-\alpha} \geq 1$ when $\frac{1}{2} \leq \alpha < 1$. But $|\gamma_n|$ can not be greater than the spectral radius $\gamma$ according to Perron-Frobenius theorem \cite{horn_matrix}, so $|\gamma_n|= \gamma $, i.e. $\gamma_n =-\gamma$ and $\alpha =\frac{1}{2}$. $\gamma_n =-\gamma=-\gamma_1$ implies that $G$ is bipartite (from \cite{th_of_graph_spec_by_cvetkovic}). Also as we have already seen that $G$ is regular, each partite set of $G$ is equal in size, and the order $n$ is even. If $G$ is any connected bipartite graph other than $K_{\frac{n}{2}, \frac{n}{2}}$ satisfying the above conditions, then it has diameter at least 3. Applying Lemma \ref{lem17} for $\alpha =0$, we can say that $G$ has at least $3+1$ i.e. $4$ distinct adjacency eigenvalues, which indicates $\zeta^+, \zeta^- \geq 2$ as adjacency eigenvalues are symmetrical about origin for a bipartite graph. So \eqref{eq19} and \eqref{eq20} hold for $i=2$. As it is already proved that $\alpha =\frac{1}{2}$ is necessary for the equality, so putting $\alpha =\frac{1}{2}$ and $i=2$ in \eqref{eq19}, we get $\mu_2 =-2\gamma_{n-1} =2\gamma_2$, the last equality holds due to the symmetry of adjacency eigenvalues of $G$. Also from $L(G)=\gamma I_n - A(G)$, we have $\mu_2=\gamma - \gamma_{n-1}$, which implies $\mu_2 = \gamma + \gamma_2$ similarly. Comparing last two expressions for $\mu_2$, we get $2\gamma_2 = \gamma + \gamma_2$ i.e. $\gamma_2 =\gamma$, which is a contradiction according to Perron-Frobenius theorem as $\gamma_1 =\gamma$. Therefore there exists no connected graph other than $K_{\frac{n}{2}, \frac{n}{2}}$ holding the equality and the proof finally concludes. \qedhere
\end{proof} 

\begin{theorem}\label{r23}
    Let $G$ be a graph with $s$ isolated vertices. If $0 \leq \alpha < 1$, then
    \begin{align}\label{eq104}
    \resizebox{.91\textwidth}{!}{$
         \big|E_{A_\alpha}(G) -(1-\alpha) \mathfrak{E}(\mathcal{L}(G)) - 2(1-\alpha) (n-m)\big| \leq |2\alpha -1|(2m-n+2s) +|n-2\alpha m|.
     $}    
    \end{align}
    Equality holds for $G \cong C_n$ and $\alpha = \frac{1}{2}$.
\end{theorem}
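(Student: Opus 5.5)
The plan is to decompose $A_\alpha(G)-\frac{2\alpha m}{n}I_n$ into three symmetric pieces: one whose energy is governed by $\mathfrak{E}(\mathcal{L}(G))$, one involving only the degrees, and a scalar multiple of $I_n$. Then we apply Lemma \ref{lem9} in both directions, i.e.\ $\big|\mathcal{E}(X+Y)-\mathcal{E}(X)\big|\le \mathcal{E}(Y)$, together with Lemma \ref{lem21}.

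\textbf{Step 1 (decomposition).} Since $A(G)=L_S(G)-D(G)$, we have $A_\alpha(G)=(1-\alpha)L_S(G)+(2\alpha-1)D(G)$. Adding and subtracting suitable multiples of the identity gives
\begin{align*}
A_\alpha(G)-\tfrac{2\alpha m}{n}I_n=(1-\alpha)\big(L_S(G)-2I_n\big)+(2\alpha-1)\big(D(G)-I_n\big)+\Big(1-\tfrac{2\alpha m}{n}\Big)I_n .
\end{align*}
Put $X=(1-\alpha)(L_S(G)-2I_n)$ and $Y=(2\alpha-1)(D(G)-I_n)+(1-\frac{2\alpha m}{n})I_n$. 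By Lemma \ref{lem21}, $E_{A_\alpha}(G)=\mathcal{E}(X+Y)$. Applying Lemma \ref{lem9} to $X+Y$, and to $X=(X+Y)+(-Y)$, yields $|E_{A_\alpha}(G)-\mathcal{E}(X)|\le\mathcal{E}(Y)$.

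\textbf{Step 2 (the line graph term).} Let $B$ be the $n\times m$ vertex--edge incidence matrix. Then $BB^T=L_S(G)$ and $B^TB=2I_m+A(\mathcal{L}(G))$, so these two matrices share their nonzero eigenvalues.
\begin{itemize}
\item If $m\le n$, the spectrum of $L_S(G)$ consists of the $m$ numbers $2+\gamma_j(\mathcal{L}(G))$ together with $n-m$ zeros. Hence $\sum_i|q_i-2|=\mathfrak{E}(\mathcal{L}(G))+2(n-m)$.
\item If $m>n$, the matrix $B^TB$ has $m-n$ extra zero eigenvalues, i.e.\ $m-n$ of the $\gamma_j(\mathcal{L}(G))$ equal $-2$. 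These contribute $2(m-n)$ to $\mathfrak{E}(\mathcal{L}(G))$ but nothing to $\sum_i|q_i-2|$, giving again $\sum_i|q_i-2|=\mathfrak{E}(\mathcal{L}(G))+2(n-m)$.
\end{itemize}
Therefore $\mathcal{E}(X)=(1-\alpha)\mathfrak{E}(\mathcal{L}(G))+2(1-\alpha)(n-m)$. This bookkeeping of zero eigenvalues is the one delicate point, and I expect it to be the main obstacle.

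\textbf{Step 3 (the remainder).} By Lemma \ref{lem9} again,
\begin{align*}
\mathcal{E}(Y)\le|2\alpha-1|\,\mathcal{E}(D(G)-I_n)+\Big|1-\tfrac{2\alpha m}{n}\Big|\,n .
\end{align*}
The second term equals $|n-2\alpha m|$. Since $D(G)-I_n$ is diagonal, its energy is $\sum_i|d_i-1|$. The $n-s$ non-isolated vertices satisfy $d_i\ge1$ and contribute $2m-(n-s)$, while the $s$ isolated vertices contribute $s$. This gives $2m-n+2s$, and inserting it yields \eqref{eq104}.

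\textbf{Step 4 (equality).} Take $G\cong C_n$ and $\alpha=\frac12$. Then $s=0$, $m=n$, $2\alpha-1=0$ and $n-2\alpha m=0$, so the right-hand side of \eqref{eq104} vanishes. Moreover $A_{1/2}(C_n)-I_n=\frac12A(C_n)$ and $\mathcal{L}(C_n)\cong C_n$. Hence $E_{A_{1/2}}(C_n)=\frac12\mathfrak{E}(C_n)=\frac12\mathfrak{E}(\mathcal{L}(C_n))$, and the left-hand side also vanishes, confirming equality.
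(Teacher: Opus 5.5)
Your proposal is correct and follows essentially the same route as the paper: the identical three-term decomposition of $A_\alpha(G)-\frac{2\alpha m}{n}I_n$, the triangle inequality for $\mathcal{E}$ used in both directions (which you package as $|\mathcal{E}(X+Y)-\mathcal{E}(X)|\le\mathcal{E}(Y)$), the count $\sum_i|d_i-1|=2m-n+2s$, and the same equality check for $C_n$ at $\alpha=\frac12$. The only difference is that you derive $\sum_i|q_i-2|=\mathfrak{E}(\mathcal{L}(G))+2(n-m)$ yourself from the incidence matrix, handling both $m\le n$ and $m>n$ correctly, whereas the paper cites this identity from Das and Mojallal.
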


\begin{proof}
    We note $A_\alpha(G) -\frac{2\alpha m}{n}I_n = \alpha D(G) +(1-\alpha)A(G) -\frac{2\alpha m}{n}I_n=(1-\alpha)(L_S(G)-2I_n) + (2\alpha -1)(D(G)-I_n) +\big(1-\frac{2\alpha m}{n}\big)I_n$.
    Therefore
    \begin{align}\label{eq107}
        E_{A_\alpha}(G) & = \mathcal{E}\Big(A_\alpha(G) -\frac{2\alpha m}{n}I_n\Big)\notag \\
        &\leq (1-\alpha)\mathcal{E}\big(L_S(G)-2I_n\big) + |2\alpha -1|\mathcal{E}\big(D(G)-I_n\big) +\Big|1-\frac{2\alpha m}{n}\Big|\mathcal{E}(I_n)\notag \\
        &=(1-\alpha)\sum_{i=1}^{n}\big|q_i(G)-2\big| +|2\alpha -1|\sum_{i=1}^{n}\big|d_i(G)-1\big| + \Big|1-\frac{2\alpha m}{n}\Big|n,
    \end{align}
    by using Lemma \ref{lem9} and Lemma \ref{lem21}. Our claim is $\sum_{i=1}^{n}|d_i(G)-1|=2m-n+2s$. We consider different cases for all possible values of $s$. Let $X$ = $\sum_{i=1}^{n}|d_i(G)-1|$. For $s=0$, $d_i(G) \geq 1$, $i=1,2, \ldots, n.$ Therefore $X= \sum_{i=1}^{n}(d_i(G)-1)=2m -n$, as $\sum_{i=1}^{n}d_i(G)=2m$. Thus the required relation holds in this case. For $1\leq s\leq n-1$, $X=\sum_{i=1}^{n-s}(d_i(G)-1) + s= 2m-(n-s)+s=2m-n+2s$. When $s=n$, i.e. $G$ is a null graph on $n$ vertices, $X=\sum_{i=1}^{n}|0-1|=n$. Also $2m-n+2s=0-n+2n=n$. So the equality holds in this case too. Hence $\sum_{i=1}^{n}|d_i(G)-1|=2m-n+2s$. Applying this and the relation $\sum_{i=1}^{n}|q_i(G)-2| = \mathfrak{E}(\mathcal{L}(G))+2n-2m$ (from \cite{relation_by_kcdas}) in \eqref{eq107}, we get $E_{A_\alpha}(G) \leq (1-\alpha)\big(\mathfrak{E}(\mathcal{L}(G))+2n-2m\big) +|2\alpha-1|(2m-n+2s)+|n-2\alpha m|$.\par
    Rearranging the relation between $A_{\alpha}(G)$ and $L_S(G)$ by keeping only $(1-\alpha)(L_S(G)-2I_n)$ in the left hand side, then obtaining matrix energy in both sides and following the similar procedure as above, we get $E_{A_\alpha}(G) \geq (1-\alpha)\big(\mathfrak{E}(\mathcal{L}(G))+2n-2m\big) -|2\alpha -1|(2m-n+2s) - |n-2\alpha m|$.
    Combining the two inequalities obtained for $E_{\alpha}(G)$, we get the required result.

    Next we need to verify the equality condition. Substituting $\alpha = \frac{1}{2}$ and $G = C_n$, we obtain the left hand side of \eqref{eq104} as $| E_{A_{\frac{1}{2}}}(C_n) -\frac{1}{2}\mathfrak{E}(\mathcal{L}(C_n)) - 2\times \frac{1}{2} (n-n)|$, using $E_{A_{\frac{1}{2}}}(C_n) = \sum_{i=1}^n |\lambda_i(A_{\frac{1}{2}}(C_n)) - \frac{2\times \frac{1}{2}\times n}{n}| =\sum_{i=1}^n |\lambda_i(\frac{1}{2}\times 2I_n + \frac{1}{2}A(C_n)) - 1| = \sum_{i=1}^n |\frac{1}{2}\lambda_i(A(C_n))| = \frac{1}{2}\mathfrak{E}(C_n)$ and $\mathcal{L}(C_n) = C_n$, which becomes zero. Putting $\alpha = \frac{1}{2}$, $s=0$ and $m=n$ for $G=C_n$, the right hand side also reduces to zero. Hence the equality holds for $G \cong C_n$ and $\alpha = \frac{1}{2}$.
\end{proof}

By setting $\alpha = 0$ in Theorem \ref{r23}, we obtain the following corollary, which establishes a relation between adjacency energy of $G$ and that of its line graph $\mathcal{L}(G)$. The proof is immediate and thus omitted. 

\begin{corollary}\label{r104}
    Let $G$ be a graph with $s$ isolated vertices. Then
    \begin{align*}
         \big|\mathfrak{E}(G) -\mathfrak{E}(\mathcal{L}(G)) - 2 (n-m)\big| \leq 2(m+s).
    \end{align*}
\end{corollary}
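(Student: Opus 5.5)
The corollary is the specialisation $\alpha=0$ of Theorem \ref{r23}, so I plan to substitute $\alpha=0$ into \eqref{eq104} and simplify each term. The hypothesis $0\le\alpha<1$ of Theorem \ref{r23} allows $\alpha=0$.

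First I evaluate the left-hand side at $\alpha=0$. We have $A_0(G)=A(G)$ and the shift $\frac{2\alpha m}{n}$ vanishes, so $E_{A_0}(G)=\mathfrak{E}(G)$, as already noted in the introduction. Also $(1-\alpha)=1$, so the left-hand side becomes $\big|\mathfrak{E}(G)-\mathfrak{E}(\mathcal{L}(G))-2(n-m)\big|$.

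Next I evaluate the right-hand side at $\alpha=0$. We get $|2\alpha-1|=1$ and $|n-2\alpha m|=n$, so the right-hand side is $(2m-n+2s)+n=2m+2s=2(m+s)$. Here I use that $n\ge 1$, so $|n|=n$.

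No step is a genuine obstacle; this is a direct substitution. The only thing to check is that the absolute values are resolved correctly at $\alpha=0$, which they are. I will not address equality here, because the corollary states none.
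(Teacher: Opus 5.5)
Your proposal is correct and is exactly the paper's (omitted) argument: set $\alpha=0$ in Theorem \ref{r23}, so that $E_{A_0}(G)=\mathfrak{E}(G)$, $|2\alpha-1|=1$ and $|n-2\alpha m|=n$. The right-hand side then collapses to $(2m-n+2s)+n=2(m+s)$.
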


Furthermore, assuming $G$ to be connected (so $s = 0$) yields the following corollary.

\begin{corollary}\label{r105}
    Let $G$ be a connected graph. Then
    \begin{align*}
         \big|\mathfrak{E}(G) -\mathfrak{E}(\mathcal{L}(G)) - 2 (n-m)\big| \leq 2m.
    \end{align*}
\end{corollary}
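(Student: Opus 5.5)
The plan is to specialize Corollary \ref{r104} to the case $s=0$. By the paper's convention, a connected graph means a connected graph on $n\ge 2$ vertices, or at least one with no isolated vertices apart from the trivial case $n=1$. So the whole argument reduces to checking that a connected graph has no isolated vertices and substituting this into the earlier corollary.

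First, observe that if $G$ is connected with $n\geq 2$, every vertex has degree at least $1$. Otherwise an isolated vertex $v$ would have no path to any other vertex, which contradicts connectivity. Hence $s=0$.

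Next, set $s=0$ in Corollary \ref{r104}. That corollary was obtained by putting $\alpha=0$ in Theorem \ref{r23}. At $\alpha=0$ we have $E_{A_0}(G)=\mathfrak{E}(G)$, $(1-\alpha)=1$, $|2\alpha-1|=1$ and $|n-2\alpha m|=n$, so the right-hand side of \eqref{eq104} becomes $(2m-n+2s)+n=2(m+s)$. With $s=0$ this is exactly
\begin{align*}
\big|\mathfrak{E}(G)-\mathfrak{E}(\mathcal{L}(G))-2(n-m)\big|\leq 2m.
\end{align*}

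The only point needing care is the degenerate case $n=1$, which is $K_1$. There $s=1$, $m=0$, $\mathcal{L}(G)$ is empty with energy $0$, and $\mathfrak{E}(K_1)=0$. The left-hand side is then $|0-0-2|=2$ while the right-hand side is $0$, so the stated inequality fails for $K_1$. I would therefore state explicitly that $n\geq 2$, consistent with the standing assumption in the paper's other results on connected graphs, where $n\geq2$ is imposed. Apart from this caveat, the proof is a direct substitution and there is no real obstacle.
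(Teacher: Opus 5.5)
Your proposal is correct and follows the paper's route exactly: connectivity gives $s=0$, and substituting this into Corollary~\ref{r104} (the $\alpha=0$ case of Theorem~\ref{r23}) yields the bound. Your caveat about $K_1$ is also right, and the paper glosses over it when it writes ``connected (so $s=0$)'': for $K_1$ one has $s=1$, the left side equals $2$ and the right side equals $0$, so the statement genuinely needs $n\geq 2$.
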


Using a similar proof technique as in Theorem \ref{r23}, the following theorem which involves one more basic graph invariant `number of pendant vertices' can be obtained easily.

\begin{theorem}\label{r24}
    Let $G$ be a graph with $s$ isolated vertices and $p$ pendant vertices. If $0 \leq \alpha < 1$, then
    \begin{align*}
         \resizebox{.95\textwidth}{!}{$\big|E_{A_\alpha}(G) -(1-\alpha) \mathfrak{E}(\mathcal{L}(G)) - 2(1-\alpha) (n-m)\big| \leq |2\alpha -1|(2m-2n+4s+2p) +2\alpha|n-m|$}.
    \end{align*}
    Equality is satisfied for $G \cong C_n$ for all $0 \leq \alpha < 1$.
\end{theorem}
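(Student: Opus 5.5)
The plan is to repeat the argument of Theorem \ref{r23}, but to split off $D(G)-2I_n$ instead of $D(G)-I_n$. This choice is what brings in the pendant vertices and makes the scalar part proportional to $n-m$. Concretely, I first verify the identity
\begin{align*}
A_\alpha(G)-\frac{2\alpha m}{n}I_n=(1-\alpha)\big(L_S(G)-2I_n\big)+(2\alpha-1)\big(D(G)-2I_n\big)+\frac{2\alpha(n-m)}{n}I_n .
\end{align*}
To check it, expand $(1-\alpha)(D+A-2I_n)$ and subtract it from $\alpha D+(1-\alpha)A-\frac{2\alpha m}{n}I_n$. The remainder is $(2\alpha-1)D+\big(2(1-\alpha)-\frac{2\alpha m}{n}\big)I_n$. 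Writing $(2\alpha-1)D=(2\alpha-1)(D-2I_n)+2(2\alpha-1)I_n$, the scalar coefficient becomes $2\alpha-\frac{2\alpha m}{n}=\frac{2\alpha(n-m)}{n}$.

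For the upper bound, I use Lemma \ref{lem21} to write $E_{A_\alpha}(G)=\mathcal{E}\big(A_\alpha(G)-\frac{2\alpha m}{n}I_n\big)$. Lemma \ref{lem9} (applied twice) and Lemma \ref{lem21} then give
\begin{align*}
E_{A_\alpha}(G)\le (1-\alpha)\sum_{i=1}^n|q_i-2|+|2\alpha-1|\sum_{i=1}^n|d_i-2|+2\alpha|n-m|.
\end{align*}
As in Theorem \ref{r23}, I substitute $\sum_{i=1}^n|q_i-2|=\mathfrak{E}(\mathcal{L}(G))+2n-2m$ from \cite{relation_by_kcdas}. The new combinatorial step is the claim $\sum_{i=1}^n|d_i-2|=2m-2n+4s+2p$. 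To see it, note that $\sum_i(d_i-2)=2m-2n$. Replacing $d_i-2$ by $|d_i-2|$ changes only the vertices of degree $0$ and degree $1$. Each isolated vertex goes from $-2$ to $2$, a gain of $4$, and each pendant vertex goes from $-1$ to $1$, a gain of $2$. Vertices of degree at least $2$ are unchanged. This yields $E_{A_\alpha}(G)\le(1-\alpha)\mathfrak{E}(\mathcal{L}(G))+2(1-\alpha)(n-m)+|2\alpha-1|(2m-2n+4s+2p)+2\alpha|n-m|$.

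For the lower bound, I rearrange the identity to isolate $(1-\alpha)(L_S(G)-2I_n)$ on one side. Applying Lemma \ref{lem9} in the same way bounds $(1-\alpha)\mathcal{E}(L_S(G)-2I_n)$ above by $E_{A_\alpha}(G)+|2\alpha-1|\sum_{i=1}^n|d_i-2|+2\alpha|n-m|$. Combining the two one-sided estimates gives the absolute-value inequality.

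For equality, take $G=C_n$, so that $s=p=0$, $m=n$ and $D=2I_n$; the right-hand side is then $0$. On the left, $A_\alpha(C_n)=2\alpha I_n+(1-\alpha)A(C_n)$ and $\frac{2\alpha m}{n}=2\alpha$, so $E_{A_\alpha}(C_n)=(1-\alpha)\mathfrak{E}(C_n)$. Since $\mathcal{L}(C_n)\cong C_n$ and $n-m=0$, the left-hand side is also $0$, for every $\alpha\in[0,1)$.

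I do not expect a serious obstacle. The only points needing care are choosing the right splitting of the identity, so that the scalar term is exactly $\frac{2\alpha(n-m)}{n}I_n$, and getting the degree-count bookkeeping for degrees $0$ and $1$ correct.
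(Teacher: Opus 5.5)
Your proof is correct and follows the route the paper intends: the paper gives no explicit proof and only says the result follows by the technique of Theorem \ref{r23}. You split off $(2\alpha-1)(D(G)-2I_n)$ so the scalar remainder is $\frac{2\alpha(n-m)}{n}I_n$, use $\sum_i|d_i-2|=2m-2n+4s+2p$ and the Das--Mojallal identity, and apply the two-sided triangle inequality for $\mathcal{E}$; your $C_n$ equality check is also right, since $D(C_n)=2I_n$ makes both sides vanish for every $\alpha$.
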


With $\alpha =0$, and further by restricting $G$ to be a strictly binary tree (which has no isolated vertex and has exactly $\frac{n+1}{2}$ pendant vertices), we derive the next two corollaries directly from Theorem \ref{r24}.

\begin{corollary}
    Let $G$ be a graph with $s$ isolated vertices and $p$ pendant vertices. Then
    \begin{align*}
         \big|\mathfrak{E}(G) - \mathfrak{E}(\mathcal{L}(G)) - 2(n-m)\big| \leq (2m-2n+4s+2p).
    \end{align*}
    Equality is satisfied for $G \cong C_n$.
\end{corollary}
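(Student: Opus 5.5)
This corollary is obtained from Theorem \ref{r24} by specializing to $\alpha = 0$, so the plan is a direct substitution followed by a check of the equality case. With $\alpha=0$ we have $A_0(G)=A(G)$, hence $E_{A_0}(G)=\mathfrak{E}(G)$, as noted in the introduction. Moreover $(1-\alpha)=1$, $|2\alpha-1|=1$ and $2\alpha|n-m|=0$. Substituting these into Theorem \ref{r24}, the left-hand side becomes $\big|\mathfrak{E}(G)-\mathfrak{E}(\mathcal{L}(G))-2(n-m)\big|$ and the right-hand side becomes $2m-2n+4s+2p$. This is exactly the claimed inequality.

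If one prefers not to rely on the unproved Theorem \ref{r24}, I would redo its proof at $\alpha=0$ directly. First write
\begin{align*}
A(G)=(L_S(G)-2I_n)-(D(G)-I_n)+I_n-\ldots
\end{align*}
but it is better to group the terms as $A(G)=(L_S(G)-2I_n)-(D(G)-2I_n)$. Since $L_S(G)-2I_n-(D(G)-2I_n)=A(G)$, this is an identity. Apply Lemma \ref{lem9} in both directions, together with Lemma \ref{lem21}:
\begin{align*}
\big|\mathcal{E}(A(G))-\mathcal{E}(L_S(G)-2I_n)\big|\le \mathcal{E}(D(G)-2I_n)=\sum_{i=1}^n|d_i-2|.
\end{align*}
Then use the relation from \cite{relation_by_kcdas}, $\sum_i|q_i-2|=\mathfrak{E}(\mathcal{L}(G))+2n-2m$.

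The one computation to verify is $\sum_i|d_i-2|$. Isolated vertices contribute $2$ each, pendant vertices contribute $1$ each, and every other vertex contributes $d_i-2$. The sum is therefore $2s+p+\big(2m-p-2(n-s-p)\big)=2m-2n+4s+2p$. This matches the right-hand side and explains the form of Theorem \ref{r24}, including the vanishing of the $2\alpha|n-m|$ term at $\alpha=0$.

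For the equality case $G\cong C_n$, we have $s=p=0$ and $m=n$, so the right-hand side is $0$. Also $\mathcal{L}(C_n)\cong C_n$, so $\mathfrak{E}(G)-\mathfrak{E}(\mathcal{L}(G))=0$, and $2(n-m)=0$. Both sides therefore vanish. I expect no real obstacle here. The only delicate point is the degree bookkeeping above, and in particular making sure the grouping with $D(G)-2I_n$ is the one that produces the stated constants.
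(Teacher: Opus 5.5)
Your proposal is correct and follows the paper's route: the corollary is exactly Theorem~\ref{r24} at $\alpha=0$, where $(1-\alpha)=1$, $|2\alpha-1|=1$ and $2\alpha|n-m|=0$. Your self-contained argument also holds: it uses $A(G)=(L_S(G)-2I_n)-(D(G)-2I_n)$, Lemma~\ref{lem9} in both directions, the Das--Mojallal identity, and the count $\sum_i|d_i-2|=2m-2n+4s+2p$, which together are the $\alpha=0$ case of the proof the paper omits for Theorem~\ref{r24} (you can simply delete the abandoned first display).
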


\begin{corollary}
    Let $G$ be a strictly binary tree on $n$ vertices. Then
    \begin{align*}
         \big|\mathfrak{E}(G) - \mathfrak{E}(\mathcal{L}(G)) - 2\big| \leq n-1.
    \end{align*}
\end{corollary}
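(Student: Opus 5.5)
The plan is to specialize Theorem \ref{r24} to $\alpha = 0$ and then substitute the parameters of a strictly binary tree. No new spectral argument should be needed; the whole proof is a parameter count.

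\textbf{Step 1: set $\alpha=0$.} With $\alpha=0$ we have $E_{A_0}(G)=\mathfrak{E}(G)$, $(1-\alpha)=1$, $|2\alpha-1|=1$ and $2\alpha|n-m|=0$. So Theorem \ref{r24} reads
\begin{align*}
\big|\mathfrak{E}(G)-\mathfrak{E}(\mathcal{L}(G))-2(n-m)\big|\leq 2m-2n+4s+2p .
\end{align*}
This is exactly the preceding corollary, which I may take as given.

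\textbf{Step 2: insert the tree parameters.} A strictly binary tree $G$ on $n$ vertices is a tree, so $m=n-1$ and hence $2(n-m)=2$. As the text preceding the corollary notes, it has no isolated vertex and exactly $\frac{n+1}{2}$ pendant vertices, so $s=0$ and $p=\frac{n+1}{2}$. The right-hand side then becomes
\begin{align*}
2(n-1)-2n+0+(n+1)=n-1,
\end{align*}
and the left-hand side becomes $\big|\mathfrak{E}(G)-\mathfrak{E}(\mathcal{L}(G))-2\big|$. This is precisely the claimed inequality.

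\textbf{Main obstacle.} The only delicate point is justifying $s=0$ and $p=\frac{n+1}{2}$. For a strictly binary tree with $n\geq 3$, every internal vertex other than the root has degree $3$, the root has degree $2$, and the leaves have degree $1$. Write $i$ for the number of internal vertices. Since every internal vertex has exactly two children, the edge count gives $2i=n-1$, so $p=n-i=\frac{n+1}{2}$. No vertex is isolated because the tree is connected with $n\geq 2$.

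The degenerate case $n=1$ (a single isolated vertex) must be excluded. There $s=1$, $p=0$ and $m=0$, so the left-hand side equals $2$ while the claimed bound is $n-1=0$, and the inequality fails. I would therefore state explicitly that $n\geq 3$, which is consistent with the paper's parenthetical remark that the tree has no isolated vertex.
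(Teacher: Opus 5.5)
Your proposal is correct and follows the paper's own derivation: specialize Theorem \ref{r24} to $\alpha=0$, then substitute $m=n-1$, $s=0$ and $p=\frac{n+1}{2}$, which turns the right-hand side into $n-1$. Your caveat is also right: for the one-vertex tree the paper's parenthetical ($s=0$, $p=\frac{n+1}{2}$) fails and the inequality would read $2\leq 0$, so the corollary genuinely needs $n\geq 3$, a hypothesis the paper leaves implicit.
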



\section{Concluding remarks}

We, by means of Theorem $\ref{r23}$ and Theorem $\ref{r24}$, introduce for the first time in the literature, a broad and explicit link between the general $A_\alpha$-energy of a graph and the adjacency energy of its line graph; a completely new bridge in the theory of graph energies with wide applicability across spectral graph theory. Although the theorems are stated as relations between two energies, they can be effectively used to produce lower and upper bounds for $E_{A_{\alpha}}(G)$, obtained by expanding modulus terms in the expressions, especially when direct computation of $E_{A_{\alpha}}(G)$ is difficult, whether due to the complexity of $G$ or its associated matrices, but $\mathcal{L}(G)$ is simpler or $\mathfrak{E}(\mathcal{L}(G))$ is easier to compute. 

In Theorems \ref{r11}, \ref{r13}, \ref{r17}, \ref{r23} and \ref{r24}, equality cases have been identified only for certain specific graph classes. Finding additional classes of graphs for which equality hold in these bounds (if exists), or establishing the converses, remain as open problems.


\section*{Statements and Declarations} 
\textbf{Competing interests:} The authors declare they have no competing interests.\par\noindent
\textbf{Availability of data and materials:} No data was used for the research described in the article.\par\noindent
\textbf{Funding information:} There is no funding source.


\bibliographystyle{plain}
\bibliography{biblioenergy}

@article{A_Q-merging_by_Nikiforov,
  AUTHOR = {Nikiforov, Vladimir},
     TITLE = {Merging the {$A$}- and {$Q$}-spectral theories},
   JOURNAL = {Appl. Anal. Discrete Math.},
  FJOURNAL = {Applicable Analysis and Discrete Mathematics},
    VOLUME = {11},
      YEAR = {2017},
    NUMBER = {1},
     PAGES = {81--107},
      ISSN = {1452-8630},
   MRCLASS = {05C50 (15A42)},
  MRNUMBER = {3648656},
MRREVIEWER = {Marianna Bolla},
       DOI = {10.2298/AADM1701081N},
       URL = {https://doi.org/10.2298/AADM1701081N},
}

@article {energy_by_gutman,
    AUTHOR = {Gutman, Ivan},
     TITLE = {The energy of a graph},
   JOURNAL = {Ber. Math.-Statist. Sekt. Forsch. Graz},
  FJOURNAL = {Graz. Forschungszentrum. Mathematisch-Statistische Sektion.
              Berichte},
    VOLUME = {103},
      YEAR = {1978},
     PAGES = {1--22},
   MRCLASS = {05C50},
  MRNUMBER = {525890},
MRREVIEWER = {A.\ J.\ Schwenk},
}

@article {lap_energy_by_gutman,
    AUTHOR = {Gutman, Ivan and Zhou, Bo},
     TITLE = {Laplacian energy of a graph},
   JOURNAL = {Linear Algebra Appl.},
  FJOURNAL = {Linear Algebra and its Applications},
    VOLUME = {414},
      YEAR = {2006},
    NUMBER = {1},
     PAGES = {29--37},
      ISSN = {0024-3795,1873-1856},
   MRCLASS = {05C50 (68R10)},
  MRNUMBER = {2209232},
MRREVIEWER = {Irene\ N. M. Sciriha},
       DOI = {10.1016/j.laa.2005.09.008},
       URL = {https://doi.org/10.1016/j.laa.2005.09.008},
}

@article {signless_lap_energy_by_abreu,
    AUTHOR = {Abreu, Nair and Cardoso, Domingos M. and Gutman, Ivan and
              Martins, Enide A. and Robbiano, Mar\'ia},
     TITLE = {Bounds for the signless {L}aplacian energy},
   JOURNAL = {Linear Algebra Appl.},
  FJOURNAL = {Linear Algebra and its Applications},
    VOLUME = {435},
      YEAR = {2011},
    NUMBER = {10},
     PAGES = {2365--2374},
      ISSN = {0024-3795,1873-1856},
   MRCLASS = {05C50 (15A18)},
  MRNUMBER = {2811121},
MRREVIEWER = {Muhuo\ Liu},
       DOI = {10.1016/j.laa.2010.10.021},
       URL = {https://doi.org/10.1016/j.laa.2010.10.021},
}

@article {A_alpha_energy_by_guo,
    AUTHOR = {Guo, Haiyan and Zhou, Bo},
     TITLE = {On the {$\alpha$}-spectral radius of graphs},
   JOURNAL = {Appl. Anal. Discrete Math.},
  FJOURNAL = {Applicable Analysis and Discrete Mathematics},
    VOLUME = {14},
      YEAR = {2020},
    NUMBER = {2},
     PAGES = {431--458},
      ISSN = {1452-8630,2406-100X},
   MRCLASS = {05C50},
  MRNUMBER = {4201153},
MRREVIEWER = {Shiping\ Liu},
       DOI = {10.2298/aadm180210022g},
       URL = {https://doi.org/10.2298/aadm180210022g},
}

@article {sum_of_A_alpha_eigenvalues_by_lin,
    AUTHOR = {Lin, Zhen},
     TITLE = {On the sum of the largest {$A_\alpha$}-eigenvalues of graphs},
   JOURNAL = {AIMS Math.},
  FJOURNAL = {AIMS Mathematics},
    VOLUME = {7},
      YEAR = {2022},
    NUMBER = {8},
     PAGES = {15064--15074},
      ISSN = {2473-6988},
   MRCLASS = {05C50},
  MRNUMBER = {4443427},
       DOI = {10.3934/math.2022825},
       URL = {https://doi.org/10.3934/math.2022825},
}

@article {A_alpha_energy_and_zagreb_by_pirzada,
    AUTHOR = {Pirzada, S. and Rather, Bilal A. and Ganie, Hilal A. and ul
              Shaban, Rezwan},
     TITLE = {On {$\alpha $}-adjacency energy of graphs and {Z}agreb index},
   JOURNAL = {AKCE Int. J. Graphs Comb.},
  FJOURNAL = {AKCE International Journal of Graphs and Combinatorics},
    VOLUME = {18},
      YEAR = {2021},
    NUMBER = {1},
     PAGES = {39--46},
      ISSN = {0972-8600,2543-3474},
   MRCLASS = {05C50 (05C09 05C12 15A18)},
  MRNUMBER = {4277578},
       DOI = {10.1080/09728600.2021.1917973},
       URL = {https://doi.org/10.1080/09728600.2021.1917973},
}

@article {bounds_on_A_alpha_energy_by_zhou,
    AUTHOR = {Zhou, Lianlian and Li, Dan and Chen, Yuanyuan and Meng,
              Jixiang},
     TITLE = {Some bounds on the {$A_\alpha$}-energy of graphs},
   JOURNAL = {Filomat},
  FJOURNAL = {Univerzitet u Ni\v su. Prirodno-Matemati\v cki Fakultet.
              Filomat},
    VOLUME = {38},
      YEAR = {2024},
    NUMBER = {4},
     PAGES = {1329--1341},
      ISSN = {0354-5180,2406-0933},
   MRCLASS = {05C50},
  MRNUMBER = {4678456},
}

@article {th_of_weyl_by_fan,
    AUTHOR = {Fan, Ky},
     TITLE = {On a theorem of {W}eyl concerning eigenvalues of linear
              transformations. {I}},
   JOURNAL = {Proc. Nat. Acad. Sci. U.S.A.},
  FJOURNAL = {Proceedings of the National Academy of Sciences of the United
              States of America},
    VOLUME = {35},
      YEAR = {1949},
     PAGES = {652--655},
      ISSN = {0027-8424},
   MRCLASS = {46.3X},
  MRNUMBER = {34519},
MRREVIEWER = {F.\ Smithies},
       DOI = {10.1073/pnas.35.11.652},
       URL = {https://doi.org/10.1073/pnas.35.11.652},
}

@article {max_props_by_fan,
    AUTHOR = {Fan, Ky},
     TITLE = {Maximum properties and inequalities for the eigenvalues of
              completely continuous operators},
   JOURNAL = {Proc. Nat. Acad. Sci. U.S.A.},
  FJOURNAL = {Proceedings of the National Academy of Sciences of the United
              States of America},
    VOLUME = {37},
      YEAR = {1951},
     PAGES = {760--766},
      ISSN = {0027-8424},
   MRCLASS = {46.3X},
  MRNUMBER = {45952},
MRREVIEWER = {L.\ G\aa rding},
       DOI = {10.1073/pnas.37.11.760},
       URL = {https://doi.org/10.1073/pnas.37.11.760},
}

@book {analytic_inequalities_by_mitrinovic,
    AUTHOR = {Mitrinovi\'c, D. S.},
     TITLE = {Analytic inequalities},
    SERIES = {Die Grundlehren der mathematischen Wissenschaften},
    VOLUME = {Band 165},
      NOTE = {In cooperation with P. M. Vasi\'c},
 PUBLISHER = {Springer-Verlag, New York-Berlin},
      YEAR = {1970},
     PAGES = {xii+400},
   MRCLASS = {26.70},
  MRNUMBER = {274686},
MRREVIEWER = {R.\ P.\ Boas},
}

@article {bounds_on_A_alpha_spread_by_lin,
    AUTHOR = {Lin, Zhen and Miao, Lianying and Guo, Shu-Guang},
     TITLE = {Bounds on the {$A_\alpha$}-spread of a graph},
   JOURNAL = {Electron. J. Linear Algebra},
  FJOURNAL = {Electronic Journal of Linear Algebra},
    VOLUME = {36},
      YEAR = {2020},
     PAGES = {214--227},
      ISSN = {1081-3810},
   MRCLASS = {05C50},
  MRNUMBER = {4094557},
MRREVIEWER = {Juan\ P.\ Rada},
       DOI = {10.13001/ela.2020.5137},
       URL = {https://doi.org/10.13001/ela.2020.5137},
}

@article {bounds_for_A_alpha_spec_rad_by_alhevaz,
    AUTHOR = {Alhevaz, Abdollah and Baghipur, Maryam and Ganie, Hilal Ahmad},
     TITLE = {Bounds for the spectral radius of the {$A_{\alpha }$}-matrix
              of graphs},
   JOURNAL = {Indian J. Pure Appl. Math.},
  FJOURNAL = {Indian Journal of Pure and Applied Mathematics},
    VOLUME = {55},
      YEAR = {2024},
    NUMBER = {1},
     PAGES = {298--309},
      ISSN = {0019-5588,0975-7465},
   MRCLASS = {05C50 (05C12)},
  MRNUMBER = {4703149},
       DOI = {10.1007/s13226-023-00363-9},
       URL = {https://doi.org/10.1007/s13226-023-00363-9},
}

@article {commutativity_by_so,
    AUTHOR = {So, Wasin},
     TITLE = {Commutativity and spectra of {H}ermitian matrices},
   JOURNAL = {Linear Algebra Appl.},
  FJOURNAL = {Linear Algebra and its Applications},
    VOLUME = {212/213},
      YEAR = {1994},
     PAGES = {121--129},
      ISSN = {0024-3795,1873-1856},
   MRCLASS = {15A42 (15A57)},
  MRNUMBER = {1306975},
MRREVIEWER = {Yik-Hoi\ Au-Yeung},
       DOI = {10.1016/0024-3795(94)90399-9},
       URL = {https://doi.org/10.1016/0024-3795(94)90399-9},
}

@book {th_of_graph_spec_by_cvetkovic,
    AUTHOR = {Cvetkovi\'c, Drago\'s and Rowlinson, Peter and Simi\'c,
              Slobodan},
     TITLE = {An introduction to the theory of graph spectra},
    SERIES = {London Mathematical Society Student Texts},
    VOLUME = {75},
 PUBLISHER = {Cambridge University Press, Cambridge},
      YEAR = {2010},
     PAGES = {xii+364},
      ISBN = {978-0-521-13408-8},
   MRCLASS = {05-02 (05C50)},
  MRNUMBER = {2571608},
MRREVIEWER = {Ligong\ Wang},
}

@article {relation_by_kcdas,
    AUTHOR = {Das, Kinkar Ch. and Mojallal, Seyed Ahmad},
     TITLE = {Relation between signless {L}aplacian energy, energy of graph
              and its line graph},
   JOURNAL = {Linear Algebra Appl.},
  FJOURNAL = {Linear Algebra and its Applications},
    VOLUME = {493},
      YEAR = {2016},
     PAGES = {91--107},
      ISSN = {0024-3795,1873-1856},
   MRCLASS = {05C50},
  MRNUMBER = {3452728},
MRREVIEWER = {Durmu\c s\ Bozkurt},
       DOI = {10.1016/j.laa.2015.12.006},
       URL = {https://doi.org/10.1016/j.laa.2015.12.006},
}

@article {spectra_of_join_by_basunia,
    AUTHOR = {Basunia, Mainak and Mahato, Iswar and Kannan, M. Rajesh},
     TITLE = {On the {$A _\alpha$}-spectra of some join graphs},
   JOURNAL = {Bull. Malays. Math. Sci. Soc.},
  FJOURNAL = {Bulletin of the Malaysian Mathematical Sciences Society},
    VOLUME = {44},
      YEAR = {2021},
    NUMBER = {6},
     PAGES = {4269--4297},
      ISSN = {0126-6705,2180-4206},
   MRCLASS = {05C50 (05C05)},
  MRNUMBER = {4321762},
MRREVIEWER = {Jianxi\ Li},
       DOI = {10.1007/s40840-021-01166-z},
       URL = {https://doi.org/10.1007/s40840-021-01166-z},
}

@article {note_on_positive_semidefiniteness_by_nikiforov,
    AUTHOR = {Nikiforov, Vladimir and Rojo, Oscar},
     TITLE = {A note on the positive semidefiniteness of {$A_\alpha(G)$}},
   JOURNAL = {Linear Algebra Appl.},
  FJOURNAL = {Linear Algebra and its Applications},
    VOLUME = {519},
      YEAR = {2017},
     PAGES = {156--163},
      ISSN = {0024-3795,1873-1856},
   MRCLASS = {05C50 (05C69 15B48)},
  MRNUMBER = {3606267},
MRREVIEWER = {Craig\ J.\ Erickson},
       DOI = {10.1016/j.laa.2016.12.042},
       URL = {https://doi.org/10.1016/j.laa.2016.12.042},
}

@article {A_alpha_spec_of_graphs_by_lin_xue_shu,
    AUTHOR = {Lin, Huiqiu and Xue, Jie and Shu, Jinlong},
     TITLE = {On the {$A_\alpha$}-spectra of graphs},
   JOURNAL = {Linear Algebra Appl.},
  FJOURNAL = {Linear Algebra and its Applications},
    VOLUME = {556},
      YEAR = {2018},
     PAGES = {210--219},
      ISSN = {0024-3795,1873-1856},
   MRCLASS = {05C50 (05C12)},
  MRNUMBER = {3842581},
       DOI = {10.1016/j.laa.2018.07.003},
       URL = {https://doi.org/10.1016/j.laa.2018.07.003},
}

@article{On_max_a_alpha_spec_rad_of_unicyclic_bicyclic_fixed_girth_pendant_by_das_mahato,
  title={On the maximum {$A_\alpha$}-spectral radius of unicyclic and bicyclic graphs with fixed girth or fixed number of pendant vertices},
  author={Das, Joyentanuj and Mahato, Iswar},
  journal={Comput. Appl. Math.},
  volume={43},
  number={6},
  pages={347},
  year={2024},
  publisher={Springer}
}

@article {bounds_for_A_alpha_energy_by_shaban,
    AUTHOR = {Shaban, Rezwan Ul and Imran, Muhammad and Ganie, Hilal A.},
     TITLE = {Bounds for the ;1-adjacency energy of a graph},
   JOURNAL = {J. Math. Inequal.},
  FJOURNAL = {Journal of Mathematical Inequalities},
    VOLUME = {18},
      YEAR = {2024},
    NUMBER = {1},
     PAGES = {127--141},
      ISSN = {1846-579X,1848-9575},
   MRCLASS = {05C50 (05C12 15A18)},
  MRNUMBER = {4764231},
       DOI = {10.7153/jmi-2024-18-08},
       URL = {https://doi.org/10.7153/jmi-2024-18-08},
}

@article {energy_of_line_graphs_by_gutman_robbiano,
    AUTHOR = {Gutman, Ivan and Robbiano, Mar\'ia and Martins, Enide Andrade
              and Cardoso, Domingos M. and Medina, Luis and Rojo, Oscar},
     TITLE = {Energy of line graphs},
   JOURNAL = {Linear Algebra Appl.},
  FJOURNAL = {Linear Algebra and its Applications},
    VOLUME = {433},
      YEAR = {2010},
    NUMBER = {7},
     PAGES = {1312--1323},
      ISSN = {0024-3795,1873-1856},
   MRCLASS = {05C50 (05C62 15A18)},
  MRNUMBER = {2680258},
MRREVIEWER = {Zoran\ S.\ Radosavljevi\'c},
       DOI = {10.1016/j.laa.2010.05.009},
       URL = {https://doi.org/10.1016/j.laa.2010.05.009},
}

@article {laplacian_energy_of_graphs_by_kcdas,
    AUTHOR = {Das, Kinkar Ch. and Mojallal, Seyed Ahmad},
     TITLE = {On {L}aplacian energy of graphs},
   JOURNAL = {Discrete Math.},
  FJOURNAL = {Discrete Mathematics},
    VOLUME = {325},
      YEAR = {2014},
     PAGES = {52--64},
      ISSN = {0012-365X,1872-681X},
   MRCLASS = {05C50},
  MRNUMBER = {3181233},
MRREVIEWER = {Renata\ Raposo\ Del Vecchio},
       DOI = {10.1016/j.disc.2014.02.017},
       URL = {https://doi.org/10.1016/j.disc.2014.02.017},
}

@article {upper_bound_on_sum_square_degree_by_caen,
    AUTHOR = {de Caen, D.},
     TITLE = {An upper bound on the sum of squares of degrees in a graph},
   JOURNAL = {Discrete Math.},
  FJOURNAL = {Discrete Mathematics},
    VOLUME = {185},
      YEAR = {1998},
    NUMBER = {1-3},
     PAGES = {245--248},
      ISSN = {0012-365X,1872-681X},
   MRCLASS = {05C35},
  MRNUMBER = {1614258},
       DOI = {10.1016/S0012-365X(97)00213-6},
       URL = {https://doi.org/10.1016/S0012-365X(97)00213-6},
}

@book{horn_matrix,
  title     = {{M}atrix {A}nalysis},
  author    = {Horn, Roger A. and Johnson, Charles R.},
  edition   = {2nd},
  year      = {2012},
  publisher = {Cambridge University Press},
  address   = {Cambridge}
}
\end{document}